\numberwithin{equation}{section}
\numberwithin{figure}{section}
\newtheorem{thm}{Theorem}[section]
\newtheorem{lem}[thm]{Lemma}
\newtheorem{prop}[thm]{Proposition}
\newtheorem*{definition*}{Definition}
\newtheorem{cor}[thm]{Corollary}
\newtheorem{conj}[thm]{Conjecture}
\newtheorem{question}[thm]{Question}
\theoremstyle{remark}
\newtheorem*{rmk}{Remark}
\theoremstyle{remark}
\newcommand{\C}{\mathbb{C}}
\newcommand{\fr}{\frac}
\newcommand{\CC}{\mathbb{C}}
\newcommand{\ZZ}{\mathbb{Z}}
\newcommand{\QQ}{\mathbb{Q}}
\definecolor{myblue}{rgb}{0.6, 0.9, 1}
\newcommand{\Rmnum}[1]{\expandafter\@slowromancap\romannumeral #1@}
\definecolor{myblue}{rgb}{0.6, 0.9, 1}
\definecolor{mygreen}{rgb}{0,0,1}
\definecolor{purple}{rgb}{0.6,0.2,1}
\definecolor{orange}{rgb}{0.8,0,0.2}
\newcommand{\Gal}{\operatorname{Gal}}
\begin{document}
\title[Polynomials with superattracting periodic points]{A Bogomolov property for the canonical height of maps with superattracting periodic points}
\author{Nicole R. Looper}
	\thanks{The author's research was supported by NSF grant DMS-1803021.}
		\begin{abstract} We prove that if $f$ is a polynomial over a number field $K$ with a finite superattracting periodic point and a non-archimedean place of bad reduction, then there is an $\epsilon>0$ such that only finitely many $P\in K^{\textup{ab}}$ have canonical height less than $\epsilon$ with respect to $f$. The key ingredient is the geometry of the filled Julia set at a place of bad reduction. We also prove a conditional uniform boundedness result for the $K$-rational preperiodic points of such polynomials, as well as a uniform lower bound on the canonical height of non-preperiodic points in $K$. We further prove unconditional analogues of these results in the function field setting. 
	\end{abstract}
\maketitle

\bigskip
\section{Introduction}

Given a morphism $f:\mathbb{P}^N\to\mathbb{P}^N$ of degree $d\ge2$ over a product formula field $K$, the \emph{canonical height} $\hat{h}_f(\alpha)$ of $\alpha\in\mathbb{P}^N(\bar{K})$ satisfies \[\hat{h}_f(\alpha)=h(\alpha)+O(1),\] where $h$ denotes the Weil height. As the canonical height is conjugation invariant, it provides a natural measure of arithmetic complexity from a dynamical point of view.

This article treats two kinds of problems concerning points of small canonical height relative to polynomials. These are inspired by parallel conjectures and results in the Diophantine setting. The first cluster of problems concerns points of small canonical height lying in the maximal abelian extension $K^{\textup{ab}}/K$. In the setting of abelian varieties, prototypical results include the following. \begin{thm}\cite{Zarhin} Let $K$ be a number field, and let $A/K$ be an abelian variety $K$-isogenous to a product $A_1\times\cdots\times A_r$ of simple abelian varieties. Then $A(K^{\textup{ab}})_{\textup{tors}}$ is finite if and only if none of the $A_i$, $1\le i\le r$ are of CM-type over $K$.\end{thm}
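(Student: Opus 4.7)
The plan is to reduce to the case where $A$ itself is simple, and then handle the CM and non-CM branches separately. A $K$-isogeny $\phi:A\to A_1\times\cdots\times A_r$ induces a $\Gal(\bar K/K)$-equivariant map on $K^{\textup{ab}}$-rational torsion whose kernel and cokernel are each bounded by $\deg\phi$. Consequently $A(K^{\textup{ab}})_{\textup{tors}}$ is finite if and only if each $A_i(K^{\textup{ab}})_{\textup{tors}}$ is finite, so it suffices to prove the statement for a simple abelian variety $B/K$.

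For the CM direction, suppose $B$ has complex multiplication by an order in a CM field $F$ with $[F:\mathbb{Q}]=2\dim B$, and that all endomorphisms are defined over $K$. The main theorem of complex multiplication (Shimura--Taniyama) shows that the Galois action on $B[n]$ factors through an abelian quotient of $\Gal(\bar K/K)$: via the reflex norm and Artin reciprocity it is described by idele class characters. Hence every torsion point of $B$ is defined over $K^{\textup{ab}}$, and $B(K^{\textup{ab}})_{\textup{tors}}$ is infinite.

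For the non-CM direction, fix a prime $\ell$ and let $G_\ell$ denote the image of the $\ell$-adic representation $\rho_\ell : \Gal(\bar K/K) \to \textup{Aut}(T_\ell B)$. By Bogomolov's theorem, $G_\ell$ is open in the $\mathbb{Q}_\ell$-points of its Zariski closure, whose identity component is (by Faltings' isogeny theorem) the base change to $\mathbb{Q}_\ell$ of the Mumford--Tate group $\textup{MT}(B)$. Since $B$ is simple and not of CM-type, $\textup{MT}(B)$ is reductive with a nontrivial semisimple derived subgroup. The image of $\Gal(\bar K/K^{\textup{ab}})$ under $\rho_\ell$ is the closure $\overline{[G_\ell,G_\ell]}$, which is therefore open in a nontrivial semisimple $\ell$-adic Lie group. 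An open subgroup of such a group acts on the $\ell$-power torsion $T_\ell B\otimes\mathbb{Q}_\ell/\mathbb{Z}_\ell$ with only finitely many fixed points, so $B(K^{\textup{ab}})[\ell^\infty]$ is finite.

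The main obstacle is the passage from finiteness at each prime to finiteness of the total torsion subgroup $B(K^{\textup{ab}})_{\textup{tors}}$. This demands a uniform statement across all $\ell$---essentially, that $B(K^{\textup{ab}})[\ell^\infty]$ vanishes for all but finitely many $\ell$---which requires an adelic enhancement of the openness result in the previous paragraph. Establishing this uniformity is the hardest ingredient of the proof and ultimately rests on Faltings' semisimplicity theorem for Tate modules together with the needed instance of the Mumford--Tate conjecture.
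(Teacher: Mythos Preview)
The paper does not prove this theorem at all: it is quoted, with attribution to Zarhin, purely as motivation for Questions~\ref{question:Bog1} and~\ref{question:Bog2}, and no argument is given or sketched. So there is no ``paper's own proof'' to compare your proposal against.

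As for the proposal itself, the reduction to a simple factor and the CM direction are fine. The non-CM direction, however, contains a genuine gap. You assert that ``by Faltings' isogeny theorem'' the identity component of the Zariski closure of the $\ell$-adic image is the Mumford--Tate group base-changed to $\mathbb{Q}_\ell$. That statement is the Mumford--Tate conjecture, which is \emph{not} a consequence of Faltings' work and remains open in general; you cannot invoke it as an input. What Faltings actually gives you is that $V_\ell B$ is a semisimple $G_K$-module and that its commutant is $\operatorname{End}(B)\otimes\mathbb{Q}_\ell$. Zarhin's argument uses only this: from semisimplicity the $\ell$-adic Lie algebra $\mathfrak{g}_\ell$ is reductive, and a direct analysis of the commutant shows that if $B$ is simple and non-CM then $\mathfrak{g}_\ell$ cannot be abelian, so the closure of $[G_\ell,G_\ell]$---which is the image of $\Gal(\bar K/K^{\mathrm{ab}})$---is nontrivial and has only finitely many fixed points on $V_\ell B/T_\ell B$. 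The uniformity in $\ell$ (that $B(K^{\mathrm{ab}})[\ell]=0$ for almost all $\ell$) is likewise obtained by Zarhin without the Mumford--Tate conjecture, via the Tate conjecture and an analysis of Frobenius tori and weights. Your final paragraph implicitly concedes that you are relying on an unproven conjecture; that is not a valid proof strategy here, since Zarhin's theorem is unconditional.
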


\begin{thm}\cite{BakerSilverman} Let $K$ be a number field, let $A/K$ be an abelian variety, let $\mathcal{L}$ be a symmetric ample line bundle on $A/K$, and let $\hat{h}:A(\bar{K})\to\mathbb{R}$ be the canonical height function associated to $\mathcal{L}$. There is an $\epsilon>0$ such that for all $P\in A(K^{\textup{ab}})$, either $P$ is a torsion point, or $\hat{h}(P)>\epsilon$.\end{thm}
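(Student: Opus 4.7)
The plan is to argue by contradiction using an equidistribution theorem for Galois orbits of small points, combined with the structural rigidity coming from the fact that $\Gal(K^{\textup{ab}}/K)$ is abelian.

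Suppose for contradiction that there exists a sequence of non-torsion points $P_1, P_2, \ldots \in A(K^{\textup{ab}})$ with $\hat{h}(P_n) \to 0$. As a normalization, after passing to a subsequence I may assume the Galois orbits $O_n := \Gal(\bar{K}/K) \cdot P_n$ are pairwise disjoint with $|O_n| \to \infty$: otherwise the $P_n$ would lie in a fixed-degree extension of $K$, and Northcott's theorem (applied to the Weil height and absorbing the $O(1)$ difference from $\hat{h}$) would force all but finitely many of them to be torsion. I would then invoke the Szpiro--Ullmo--Zhang equidistribution theorem at the archimedean places and its non-archimedean analogue due to Chambert-Loir at the finite places to conclude that the normalized orbit measures $\mu_n := \frac{1}{|O_n|}\sum_{Q\in O_n}\delta_Q$ converge weakly, at each place $v$ of $K$, to the canonical measure $\mu_v$ on the Berkovich analytic space $A_v^{\textup{an}}$; at archimedean places this $\mu_v$ is the normalized Haar measure on $A(\bC_v)$.

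The second step is to exploit the abelian structure. Since $P_n \in K^{\textup{ab}}$, the commutator subgroup of $\Gal(\bar{K}/K)$ fixes $P_n$, and by Artin reciprocity the orbit action is controlled by Frobenius elements at unramified primes of $K$ that mutually commute on $P_n$. The parallelogram law for the canonical height gives $\hat{h}(\sigma P_n - \tau P_n) \le 4\hat{h}(P_n) \to 0$, so differences within each orbit are themselves small, yielding a much richer family of equidistributing small points that is now constrained by class-field-theoretic data.

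The main obstacle is converting this algebraic rigidity into a contradiction with equidistribution. At the archimedean places alone the argument stalls: Haar measure is translation invariant, so any abelian constraint on orbit translates is automatically compatible with the limit. My plan is therefore to pivot to a non-archimedean place $v$ of bad reduction, where $\mu_v$ is supported on a proper closed subset of $A_v^{\textup{an}}$ (for instance a tropical skeleton in the totally degenerate case); combining this with Chebotarev density applied to Frobenius elements at good-reduction primes, I would try to force the equidistribution limit to concentrate on a thin subset that cannot support $\mu_v$, producing the contradiction. The delicate case is when $A$ has everywhere potentially good reduction, such as in the CM case, where one must instead argue via the Galois action on the Tate module at a suitable ordinary prime (using Serre--Tate theory to play the role of the skeleton). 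Performing this local analysis uniformly across both the generic and the CM cases is the chief technical hurdle.
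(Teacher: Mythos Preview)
The paper does not contain a proof of this statement. Theorem~1.2 is quoted from Baker and Silverman \cite{BakerSilverman} purely as motivation for the dynamical questions that follow; it is never revisited, and the proofs in Section~4 concern only the polynomial maps of Theorems~\ref{thm:Bogomolov}, \ref{thm:UBCsuperattracting}, and \ref{thm:mincanht}. So there is nothing in the paper to compare your proposal against.

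As for the proposal itself, it is not a proof but a plan, and you have already identified the point where it breaks: equidistribution toward Haar measure at the archimedean places is compatible with any translation structure, so no contradiction emerges there, and your fallback to a place of bad reduction collapses precisely in the hardest case (everywhere potentially good reduction, in particular CM). Waving at Serre--Tate theory and Frobenius at an ordinary prime does not close this gap; you would need a concrete local obstruction, and none is supplied. The actual argument of Baker and Silverman is of a rather different flavor: it does not proceed via equidistribution of small points, but combines bounds on the rank of $A(L)$ for $L/K$ abelian (via results of Ribet/Zarhin type on the Galois image), a lower bound for the canonical height on a subgroup of bounded rank, and local height estimates. If you want to reconstruct a proof, that is the template to follow; an equidistribution-only approach along the lines you sketch is not known to suffice.
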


This leads to two natural questions in the dynamical setting.

\begin{question}\label{question:Bog1} For which morphisms $f:\mathbb{P}^N\to\mathbb{P}^N$ of degree at least $2$ over a number field $K$ are there only finitely many preperiodic $P\in\mathbb{P}^N(K^{\textup{ab}})$?\end{question}

\begin{question}\label{question:Bog2} For which morphisms $f:\mathbb{P}^N\to\mathbb{P}^N$ of degree at least $2$ over a number field $K$  does there exist an $\epsilon>0$ such that for all $P\in\mathbb{P}^N(K^{\textup{ab}})$, either $\hat{h}_f(P)=0$ or $\hat{h}_f(P)\ge\epsilon$? \end{question}

Our first theorem provides a partial answer to Questions \ref{question:Bog1} and \ref{question:Bog2}. We will say that a nonarchimedean place $v$ of $K$ is a place of \emph{bad reduction} if its $v$-adic Julia set is not a Type II point in the Berkovich analytification.

\begin{thm}\label{thm:Bogomolov} Let $K$ be a product formula field, and let $f\in K[z]$ be a polynomial. Suppose $f$ has a superattracting finite periodic point, and a nonarchimedean place of bad reduction. Then there is an $\epsilon>0$ such that there are only finitely many $P\in K^{\textup{ab}}$ with \[\hat{h}_f(P)<\epsilon.\] In particular, $f$ has only finitely many preperiodic points in $K^{\textup{ab}}$.\end{thm}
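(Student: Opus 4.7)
The plan is to argue by contradiction via arithmetic equidistribution, with the geometric obstruction coming from the structure of the $v$-adic filled Julia set. Suppose, for contradiction, that there exists an infinite sequence $(P_n)_{n\ge 1}\subset K^{\textup{ab}}$ with pairwise distinct Galois orbits and $\hat{h}_f(P_n)\to 0$. I would invoke the equidistribution theorem for points of small canonical height (Baker--Rumely, Favre--Rivera-Letelier, Chambert-Loir, Yuan) at the nonarchimedean place $v$ of bad reduction: the normalized Galois-orbit measures
\[
\nu_n \;=\; \frac{1}{|\Orb(P_n)|}\sum_{Q\in\Orb(P_n)}\delta_Q
\]
converge weakly on $\mP^{1,\textup{an}}_{\mathbb{C}_v}$ to the $f$-equilibrium measure $\mu_{f,v}$, whose support is the Berkovich Julia set $J_{f,v}$.

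Next I would exploit the abelian hypothesis: fixing an embedding $K^{\textup{ab}}\hookrightarrow\mathbb{C}_v$, global abelianness passes to an abelian quotient of the local decomposition group, so every Galois conjugate of $P_n$ lies in $K_v^{\textup{ab}}$. Passing to the weak limit, the support of $\mu_{f,v}$ is contained in the closure of $K_v^{\textup{ab}}$ inside $\mP^{1,\textup{an}}_{\mathbb{C}_v}$. The remaining task is to contradict this by exhibiting a positive $\mu_{f,v}$-mass region of $J_{f,v}$ not lying in $\overline{K_v^{\textup{ab}}}$. Let $\alpha\in\ol{K}$ be the superattracting periodic point, of period $n$ and local multiplicity $m\ge 2$ for $f^n$ at $\alpha$. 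Since $v$ is a place of bad reduction, $J_{f,v}$ is strictly larger than a single Type II point, and the Berkovich tree geometry of the filled Julia set $\mathcal{K}_{f,v}$ is nontrivial. Near $\alpha$, the B\"ottcher coordinate conjugates $f^n$ to $z\mapsto z^m$, producing a basin component $U$ whose topological boundary lies in $J_{f,v}$; pulling back by iterates of $f$ sweeps out $J_{f,v}$ while forcing local ramification along the critical branches to grow like $m^N$. Because the ramification filtration of $K_v^{\textup{ab}}/K_v$ is governed, via local class field theory, by the unit group $\mathcal{O}_v^\times$ (and hence by abelian monodromy), the generically non-abelian monodromy of $f^N$ at a critical value eventually prevents these preimages from lying in $\overline{K_v^{\textup{ab}}}$. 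Combined with the bad-reduction geometry of $\mathcal{K}_{f,v}$, this produces a $\mu_{f,v}$-substantial portion of $J_{f,v}$ disjoint from $\overline{K_v^{\textup{ab}}}$, contradicting the previous paragraph.

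The main obstacle is making this last step rigorous: one must extract, from the simultaneous presence of a superattracting critical cycle and a Berkovich-nontrivial filled Julia set, an explicit test function that separates $\mu_{f,v}$ from $\overline{K_v^{\textup{ab}}}$. Neither hypothesis alone suffices: $f(z)=z^d$ is superattracting but has good reduction everywhere and $\mathbb{G}_m$-torsion in $K^{\textup{ab}}$, while bad reduction without a superattracting cycle need not obstruct abelian approximation of $J_{f,v}$. The novelty, and where a careful geometric analysis of $\mathcal{K}_{f,v}$ at $v$ must be deployed, is to combine the two so that the B\"ottcher disc of $\alpha$ lies inside a Fatou component whose boundary inherits the fractal branching of the bad-reduction Julia set. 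The consequence that only finitely many preperiodic points lie in $K^{\textup{ab}}$ is then immediate, as preperiodic points have canonical height zero.
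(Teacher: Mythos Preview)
Your setup via contradiction and arithmetic equidistribution matches the paper's framework, but the mechanism you propose for the contradiction---showing that $\mu_{f,v}$ carries mass outside $\overline{K_v^{\textup{ab}}}$ via non-abelian monodromy of iterated preimages---is not the paper's argument and, as you yourself concede, is not made rigorous. There are two concrete problems. First, the Berkovich closure of $K_v^{\textup{ab}}$ in $\mathbf{A}_v^1$ is large and contains many Type~II and Type~III points; there is no evident reason the support of $\mu_{f,v}$ should escape it. Second, Galois-theoretic statements about $f^{-N}(\beta)$ for algebraic $\beta$ do not transfer to statements about where $\mu_{f,v}$ lives, since the equilibrium measure is not supported on Type~I points. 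Your use of the abelian hypothesis (``all conjugates lie in $K_v^{\textup{ab}}$'') is too coarse to close the argument.

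The paper extracts a sharper consequence of abelianness: since $L^i/K$ is abelian, the decomposition groups $\textup{Decomp}(\mathfrak Q\mid v)$ coincide for \emph{all} primes $\mathfrak Q$ of $L^i$ above $v$, and this common group acts by isometries on each completion $L^i_w$. Now the ``splash'' geometry of $\mathcal K_v$ near the superattracting basin (Proposition~\ref{prop:shape} and its corollaries) supplies a disk $\mathcal B'$ strictly containing the immediate basin of $p_0$ with $\log\gamma_v(\mathcal B')\le\log\gamma_v(\mathcal K_v)-\eta$ for some $\eta>0$, into which a fixed positive proportion of the Galois conjugates of $P_i$ must fall by equidistribution. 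Whenever $P_i\in\mathcal B'$ at a given $w\mid v$, the isometric action traps the entire decomposition orbit $T_i=\{\sigma(P_i):\sigma\in\textup{Decomp}(\mathfrak Q_i\mid v)\}$ inside $\mathcal B'$, yielding $\log d_w(T_i)-\log\gamma_w(\mathcal K_w)\le -\eta$ at a positive-density set of places $w\mid v$. At all other places the analogous quantity is bounded by any prescribed $\epsilon'>0$ once $|T_i|$ is large. Summing over $M_{L^i}$ and applying the product formula gives $0\le(-\eta\epsilon+\epsilon'(1-\epsilon))r_v\lambda_{\textup{crit},v}(f)+O(\epsilon')$, a contradiction for small $\epsilon'$. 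The argument is thus capacity-theoretic via the product formula, not a support-inclusion argument; the missing idea in your proposal is precisely this isometric trapping of the decomposition orbit in a set of strictly smaller transfinite diameter.
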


These maps have a very particular filled Julia set structure at the places of bad reduction, one that is key to proving each of the main results in this paper. The archimedean counterpart of this filled Julia set structure is illustrated in Figure \ref{figure1}. We remark that a polynomial satisfying these hypotheses necessarily has degree at least $3$.

In forming dynamical analogues of results in arithmetic geometry, one often finds that post-critically finite (PCF) maps play the role of CM abelian varieties. One might be tempted to conjecture, therefore, that any PCF map defined over a number field $K$ has infinitely many preperiodic points in $K^{\textup{ab}}$. However, this is false. In \cite{DvornicichZannier}, Dvornicich and Zannier show that when $K=\mathbb{Q}$, a degree $d$ polynomial $f\in K[z]$ has infinitely many preperiodic points in $K^{\textup{ab}}$ if and only if $f$ is conjugate over $\overline{\mathbb{Q}}$ to $T_d(\pm z)$ or $(\pm z)^d$, where $T_d$ is the $d$-th Chebyshev polynomial. On the other hand, the polynomial $f(z)=z^2-1$ is PCF but is not conjugate to any of these maps. 

In a different direction, one can ask for bounds on the number of `small points' that hold uniformly across families of maps. The most well-known conjecture in this vein is the Uniform Boundedness Conjecture of Morton and Silverman, a major open problem in arithmetic dynamics.

\begin{conj}\label{conj:UBC}\cite{MortonSilverman} Let $d\ge2$, $N\ge1$, and let $K$ be a number field. Let $f:\mathbb{P}^N\to\mathbb{P}^N$ be a morphism of degree $d$ defined over $K$. There is a constant $B=B(d,N,[K:\QQ])$ such that $f$ has at most $B$ preperiodic points in $\mathbb{P}^N(K)$.\end{conj}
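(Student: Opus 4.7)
The plan must be honest at the outset: this is the Morton--Silverman Uniform Boundedness Conjecture, one of the central open problems in arithmetic dynamics, and I do not know an unconditional proof. What follows is the strategy I would attempt, closely modeled on the local-geometric approach developed in the rest of this paper. The natural first reduction is to a uniform canonical-height gap: if one can produce $\delta=\delta(d,N,[K:\QQ])>0$ such that every non-preperiodic $P\in\mathbb{P}^N(K)$ satisfies $\hat{h}_f(P)>\delta$, then Northcott's theorem in its quantitative form, counting points of bounded canonical height and bounded degree in terms of $\delta$, $d$, $N$, and $[K:\QQ]$, yields the desired bound $B$ on the cardinality of the set of preperiodic points of $f$ in $\mathbb{P}^N(K)$.

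Building such a gap out of local data is the next step and the true heart of the problem. The canonical height decomposes as a sum over places $v$ of local heights $\hat{\lambda}_{f,v}$ on the Berkovich analytification $\mathbb{P}^N_{\textup{an},v}$. At places of good reduction, lattice counting in the special fiber yields a local positivity essentially uniform in $f$; at places of bad reduction one is forced into the analysis of the $v$-adic filled Julia set. Theorem \ref{thm:Bogomolov} is the proof of concept for what one would want uniformly: when $f$ is a polynomial with a superattracting finite periodic point, the $v$-adic filled Julia set is rigid enough to force a local contribution to $\hat{h}_f(P)$ that is bounded below by an $f$-independent constant on non-preperiodic points, even in $K^{\textup{ab}}$. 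A UBC-style attack requires such local positivity for \emph{every} morphism of degree $d$, with the constant depending only on $d$, $N$, and the local data at $v$.

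Given sufficiently uniform local bounds, a Yuan--Zhang-style adelic equidistribution argument combined with Zhang's essential-minimum inequality, applied to $\mathbb{P}^N$ with its $f$-invariant adelic metric, would convert local positivity of $\hat{\lambda}_{f,v}$ into the sought global gap $\delta$. Uniformity in $K$ rather than in $f$ alone would follow from the standard observation that residue degrees and ramification indices at places of $K$ are controlled by $[K:\QQ]$, so the local constants can be chosen to depend only on $d$, $N$, and $[K:\QQ]$. One should expect, along the way, to peel off the ``CM-like'' exceptional maps (power, Chebyshev, and their higher-dimensional analogues) separately, as in the Dvornicich--Zannier classification cited above.

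The principal obstacle is precisely the uniform local positivity at places of bad reduction. The method of this paper extracts it from the very rigid structure imposed by a superattracting periodic point; for a general $f\colon\mathbb{P}^N\to\mathbb{P}^N$ we have no mechanism to rule out sequences $f_n$, places $v_n$, and non-preperiodic $P_n\in\mathbb{P}^N(K)$ with $\hat{h}_{f_n}(P_n)\to 0$. This is essentially the dynamical Lehmer problem, which is itself open in every nontrivial generality and is known to imply strong versions of UBC. Any realistic progress on the conjecture as stated is therefore expected to require either a structural restriction on $f$, as in the present paper, or a genuinely new method for controlling Berkovich Julia sets uniformly across all morphisms of given degree and dimension.
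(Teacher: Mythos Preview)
The statement is the Morton--Silverman Uniform Boundedness Conjecture, labeled \texttt{conj} in the paper and cited to \cite{MortonSilverman}; the paper does not prove it and offers no proof to compare against. Your honesty on this point is correct and appropriate.

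That said, the reduction you propose in the first paragraph does not work as stated. A uniform lower bound $\hat{h}_f(P)>\delta$ for all non-preperiodic $P\in\mathbb{P}^N(K)$ says nothing about the \emph{number} of preperiodic points: those all satisfy $\hat{h}_f(P)=0$, and to count them via Northcott you would need a uniform bound on their Weil heights. But the comparison $\hat{h}_f(P)=h(P)+O_f(1)$ has an implied constant depending on $f$, so ``quantitative Northcott for canonical height, uniform in $f$'' is essentially a restatement of the conjecture rather than a reduction of it. The dynamical Lang/Lehmer problem (your height gap) and UBC are genuinely distinct open problems; the paper proves restricted versions of each (Theorems~\ref{thm:mincanht} and~\ref{thm:UBCsuperattracting}) by parallel arguments, not by deducing one from the other.

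The actual mechanism in the paper's conditional results is different from what you sketch: one assumes a large set $T\subseteq K$ of points of small canonical height, uses global quantitative equidistribution (Theorem~\ref{thm:globaleq}) together with the filled-Julia-set geometry at bad places to force differences $P_i-P_j$ to be $v$-adically ``powerful'' at a $\delta$-slice of bad places, and then derives a violation of the $abc$-conjecture from the resulting gap between height and radical. No height-gap-then-Northcott step appears. If you want to model a UBC-style attack on this paper, that is the template to generalize, and the genuine obstruction---which you do identify correctly later on---is producing the requisite Julia-set rigidity without the superattracting hypothesis.
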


For $d\ge3$ and $K$ a number field or a one-dimensional function field of characteristic zero, let $\mathfrak{S}_{d,m}(K)$ be the set of polynomials of degree $d$ in $K[z]$ having a superattracting periodic point $P\ne\infty$ of period $m$. We prove a conditional uniform boundedness result for maps in $\mathfrak{S}_{d,m}(K)$.

\begin{thm}\label{thm:UBCsuperattracting} Let $K$ be a number field or a one-dimensional function field of characteristic zero. Let $d\ge3$, $m\ge1$, and $f\in\mathfrak{S}_{d,m}(K)$. If $K$ is a number field, assume the $abc$-conjecture for $K$. If $K$ is a function field, assume $f$ is not isotrivial. There is a $B=B(d,m,K)$ such that $f$ has at most $B$ preperiodic points contained in $K$.\end{thm}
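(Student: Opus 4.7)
I would approach Theorem \ref{thm:UBCsuperattracting} by combining the local filled-Julia-set geometry that underlies Theorem \ref{thm:Bogomolov} with a global $abc$-type input, in the spirit of previous uniform boundedness results proved along these lines. The broad plan is to first use $abc$ (or its Mason--Stothers analogue in the function field case, whence the unconditional statement) to force \emph{quantitatively} bad reduction at some place, and then to run a uniform version of the local analysis of Theorem \ref{thm:Bogomolov} at that place.

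The first step is to attach to each $f \in \mathfrak{S}_{d,m}(K)$ an auxiliary algebraic quantity whose prime factorization detects bad reduction. Writing $f^m(z) - z$ and $(f^m)'(z)$, the existence of a superattracting $m$-cycle through $P \ne \infty$ forces $P$ to be a common root of these with high multiplicity, so the resultant $\Res_z(f^m(z) - z, (f^m)'(z))$ and a cognate discriminant of $f$ itself satisfy a polynomial identity relating them. Applying the $abc$-conjecture for $K$ to a suitable triple extracted from this identity yields a lower bound on the radical of the relevant discriminant, and hence the existence of a nonarchimedean place $v_0$ whose local measure of bad reduction $\delta_{v_0}(f)$ satisfies $\delta_{v_0}(f) \ge c_1(d,m,K)$. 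In the function field setting, non-isotriviality directly prevents the discriminant from being a constant, so the same conclusion follows from Mason--Stothers without any conjecture.

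The second step is to make Theorem \ref{thm:Bogomolov} quantitative at $v_0$. The filled Julia set $\mathcal{K}_{f,v_0}$ at a place of bad reduction has the disk-plus-attached-fingers structure hinted at by Figure \ref{figure1}, and the crucial input from the superattracting periodic point is that the component containing $P$ is a genuine disk on which $f^m$ acts as an honest degree-$d^m$ contracting map. From this, I would extract a lower bound, depending only on $d$, $m$, and $\delta_{v_0}(f)$, on the local canonical height $\hat{\lambda}_{f,v_0}(Q)$ for every $Q \in K_{v_0}$ that is not preperiodic and lies outside a controlled neighborhood of the exceptional set. Summing the local heights via the product formula then gives $\hat{h}_f(Q) \ge \epsilon(d,m,K)$ for every non-preperiodic $Q \in K$, and simultaneously constrains the adelic locus where $K$-rational preperiodic points can live.

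Finally, I would bound the number of $K$-rational preperiodic points by combining this uniform height gap with a Northcott-type counting argument: preperiodic points have Weil height bounded in terms of $d$, $m$, $K$, and the uniform height gap lets one cover the preperiodic locus by a controlled number of adelic neighborhoods in which the number of $K$-points is bounded via standard arguments. The main obstacle, in my view, is the second step: passing from the qualitative geometric picture used for Theorem \ref{thm:Bogomolov} to an estimate whose constants depend only on $(d,m,K,\delta_{v_0}(f))$ and not on $f$ requires a careful uniform Berkovich/tropical analysis of the ``fingers'' of $\mathcal{K}_{f,v_0}$ and of how their lengths degenerate as $\delta_{v_0}(f) \to 0$. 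A secondary subtlety is choosing the $abc$ identity in the first step carefully, so that the bad place $v_0$ extracted has $\delta_{v_0}(f)$ large enough to feed nontrivially into the local bound rather than merely being positive.
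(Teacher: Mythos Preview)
Your overall plan differs from the paper's in a crucial structural way, and as written it has genuine gaps.

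In the paper, the $abc$-conjecture is \emph{not} applied to a discriminant or resultant of $f$; it is applied to an $abc$-triple built from the preperiodic points themselves. The argument runs: assume $T\subset K$ is a large set with small average canonical height; the global quantitative equidistribution theorem (Theorem~\ref{thm:globaleq}) forces $T$ to be $\epsilon$-equidistributed at a $\delta$-slice of bad places; Proposition~\ref{prop:uniformram} then shows that at those places the radii $r(\mathcal{B}_i)$ live in $|K_v^\times|$ only after raising to a large power $k$, so the bad-place contribution to $\mathrm{rad}_K$ of a triple $(-P_1,P_2,P_1-P_2)$ is tiny compared with its height contribution (Lemma~\ref{lem:heightwin}); this yields $h(P)-\mathrm{rad}_K(P)\ge c\,h_{\textup{crit}}(f)$ and hence an $abc$ violation unless $h_{\textup{crit}}(f)$ is bounded. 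Finally, bounded $h_{\textup{crit}}$ plus $h_{\textup{crit}}\asymp h_{\mathcal{M}_d}$ and Northcott finish. So $abc$ enters at the \emph{end}, as the contradiction, and the equidistribution/``powerfulness'' machinery is what feeds it.

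Your Step~1 inverts this and, as stated, does not deliver what you claim. The $abc$ inequality bounds height \emph{above} by the radical; feeding a discriminant identity into it tells you that the discriminant has many prime factors, not that some single place $v_0$ has $\delta_{v_0}(f)\ge c_1(d,m,K)$. Indeed, the badness can be spread thinly over unboundedly many places, and there are $f\in\mathfrak{S}_{d,m}(K)$ with $h_{\textup{crit}}(f)=0$ (the paper gives $-\tfrac{2}{9}z^3-z^2$), so no uniform lower bound on any local $\delta_{v_0}(f)$ can hold across the family. Your Step~2 also cannot work as phrased: there is no lower bound on $\hat{\lambda}_{f,v_0}(Q)$ for non-preperiodic $Q\in K_{v_0}$, since any $Q$ in the filled Julia set $\mathcal{K}_{v_0}$ has $\hat{\lambda}_{f,v_0}(Q)=0$ regardless of global preperiodicity; a single place never sees the global height gap. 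Finally, even granting Steps~1--2, the place $v_0$ depends on $f$, so your Northcott counting in Step~3 would produce a bound depending on $v_0$ (e.g.\ on its residue characteristic), not a bound $B(d,m,K)$. The missing idea is precisely the paper's: one must work at \emph{all} bad places simultaneously via equidistribution, and apply $abc$ to the points rather than to the map.
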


We also prove a theorem about points of small yet nonzero canonical height. The conclusion of this theorem can be viewed as a higher-dimensional, dynamical parallel to Lang's conjecture on the minimal canonical height of nontorsion points on elliptic curves over a number field. 

\begin{thm}\label{thm:mincanht}Let $K$ be a number field or a one-dimensional function field of characteristic zero. Let $d\ge3$, $m\ge1$, and $f(z)\in\mathfrak{S}_{d,m}(K)$. If $K$ is a number field, assume the $abc$-conjecture for $K$. There is a $\kappa=\kappa(d,m,K)>0$ such that \[\hat{h}_f(P)\ge\kappa\max\{1,h_{\textup{crit}}(f)\}\] for all non-preperiodic points $P\in K$.\end{thm}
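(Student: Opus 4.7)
The proof plan combines the filled Julia set geometry at nonarchimedean places---which already underlies Theorems \ref{thm:Bogomolov} and \ref{thm:UBCsuperattracting}---with an $abc$-based Diophantine analysis, unconditional in the function field setting. After conjugating $f$ by an affine automorphism of $\mathbb{A}^1_K$, I would place the superattracting periodic point at $0$, so that one critical point contributes $0$ to $h_{\textup{crit}}(f) = \sum_{c} \hat{h}_f(c)$ and this quantity encodes the arithmetic complexity of the remaining critical data of $f$.

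Given a non-preperiodic point $P \in K$, I would decompose $\hat{h}_f(P) = \sum_v \hat{\lambda}_{f,v}(P)$ into local canonical heights, where $\hat{\lambda}_{f,v}(P)$ controls the $v$-adic distance from $P$ to the filled Julia set of $f$ at $v$. If $\hat{h}_f(P)$ is small relative to $\max\{1, h_{\textup{crit}}(f)\}$, then $P$ must lie very close to the filled Julia set at every place. At each nonarchimedean place of bad reduction, the filled Julia set has the special structure described after Theorem~\ref{thm:Bogomolov}, which forces tight $v$-adic relations between $P$, its forward iterates, and the non-superattracting critical points of $f$.

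I would then exploit these relations through an $abc$-style inequality: the $abc$-conjecture for $K$ in the number field case, and the Mason--Stothers theorem in the function field case. Because $P$ is non-preperiodic, the orbit $\{f^n(P)\}_{n\ge 0}$ is an infinite sequence of distinct $K$-points whose Weil heights grow like $d^n \hat{h}_f(P)$. Setting up the $abc$-triple from resultants of $f^n(P)$ against iterates of the non-superattracting critical points, and bounding the radical term via $h_{\textup{crit}}(f)$ together with data depending only on $(d,m,K)$, should yield the lower bound $\hat{h}_f(P) \ge \kappa(d,m,K)\max\{1, h_{\textup{crit}}(f)\}$ after taking $n$ suitably large. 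The case in which $h_{\textup{crit}}(f)$ is bounded collapses to an absolute lower bound, while the large $h_{\textup{crit}}(f)$ regime is where the scaling factor genuinely appears.

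The main obstacle I anticipate is calibrating the $abc$ step so that the scaling in $h_{\textup{crit}}(f)$ genuinely survives while $\kappa$ depends only on $d$, $m$, and $K$. If the radical term in the $abc$-inequality absorbs too much of the critical orbit data, the resulting bound will only be absolute rather than scaling with $h_{\textup{crit}}(f)$; preventing this requires a careful quantitative use of the tube-like geometry at bad reduction places to arrange that the bad primes appearing in the radical come from a small, controllable set relative to the critical height, uniformly over $f \in \mathfrak{S}_{d,m}(K)$.
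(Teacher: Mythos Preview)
Your outline is in the right spirit---the proof does hinge on the filled Julia set geometry at bad places combined with an $abc$-inequality, and you correctly identify the main obstacle as controlling the radical uniformly in $f$. But two concrete pieces are missing or misdirected.

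First, the $abc$-triple. The paper does not use ``resultants of $f^n(P)$ against iterates of the non-superattracting critical points''; it is not clear how such an object would yield a relation $a+b+c=0$ over $K$, especially since the critical points need not be $K$-rational. Instead, one takes the orbit of $P$ (which, as you note, gives arbitrarily many distinct $K$-points of small height once $\hat h_f(P)$ is small enough), selects three well-chosen points $P_1,P_2,P_3$ from this set, translates so $P_3=0$, and applies $abc$ to the elementary triple $(-P_1,P_2,P_1-P_2)$. The height of this triple is bounded below by a positive multiple of $h_{\textup{crit}}(f)$ via Lemma~\ref{lem:heightwin}, which compares $\sum_v r_v\log\max\{|P_1|_v,|P_2|_v\}$ against the critical height using the wing structure of $\mathcal{E}_1$.

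Second, and more seriously, the mechanism that makes the radical small is not ``$P$ lies close to the filled Julia set at every $v$''; it is the global quantitative equidistribution theorem (Theorem~\ref{thm:globaleq}). One needs a \emph{set} $T$ of many small $K$-points, and the theorem guarantees that $T$ is $\epsilon$-equidistributed at a prescribed level $m_0$ across a $\delta$-slice of bad places. Combined with Proposition~\ref{prop:uniformram}---which says the radii $r(\mathcal{B}_i)$ of the nested basins around the superattractor eventually require arbitrarily large denominators in the value group---the $K$-rationality of the points in $T$ forces $e^{g_v}\in(|K_v^\times|)^k$ at each place in the slice. This is what makes the differences $P_i-P_j$ ``powerful'' at bad places, so that $\mathrm{rad}_K(-P_1,P_2,P_1-P_2)$ is bounded by an arbitrarily small multiple of $h_{\textup{crit}}(f)$. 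Your sketch gestures at this with ``tube-like geometry,'' but without the equidistribution step there is no way to pass from local structure at a single place to a uniform statement about a $\delta$-slice of places, which is exactly what the $abc$ application demands.
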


\begin{rmk} It is possible for $f\in\mathfrak{S}_{d,m}(K)$ to satisfy $h_{\textup{crit}}(f)=0$ (i.e., to have $f$ be postcritically finite) and yet for $f$ to have a place of bad reduction. An example, due to Laura DeMarco, is the function $f(z)=-\frac{2}{9}z^3-z^2\in\QQ[z]$, which has $v=3$ as a place of bad reduction. Hence, the lower bound in Theorem \ref{thm:mincanht} must be $\kappa\max\{1,h_{\textup{crit}}(f)\}$ (as opposed to $\kappa h_{\textup{crit}}(f)$) to give the statement content in this case.\end{rmk}

A key ingredient in the proofs of Theorems \ref{thm:UBCsuperattracting} and \ref{thm:mincanht} is a global quantitative equidistribution result (Theorem \ref{thm:globaleq}) for points of small canonical height. A crucial feature of this theorem is that it holds uniformly across all polynomials $f\in\mathfrak{S}_{d,m}(K)$ of degree $d$. This result provides a way of measuring a certain kind of equidistribution of a set of points across all places simultaneously, and says that given any desired `quality of equidistribution' a sufficiently large set of sufficiently small points necessarily achieves this quality. We also leverage the special geometry of the Julia sets at places of bad reduction for these maps. It is ultimately this geometry that allows for the $abc$-conjecture, a Diophantine approximation statement on curves, to be put to fruitful use in proving dynamical uniformity results across families having dimension much larger than $1$. It should be noted that the subspace $\mathfrak{S}_{d,m}(K)$ of the moduli space $\mathcal{P}_d$ of degree $d$ polynomials over $K$ has codimension $1$, and hence is affine of dimension $d-2$ over $K$. By contrast, the Latt\`{e}s maps descended from multiplication-by-$m$ maps on elliptic curves form a one-dimensional family in the moduli space $\mathcal{M}_{m^2}$ of degree $m^2$ rational functions. Theorems \ref{thm:Bogomolov}, \ref{thm:UBCsuperattracting}, and \ref{thm:mincanht} thus address a larger class of maps than their existing counterparts in the setting of elliptic curves, in the sense that these maps form a much higher dimensional subspace of $\mathcal{M}_d$ when $d$ is a large square.

\indent\textbf{Acknowledgements.} It is a pleasure to thank Matt Baker, Laura DeMarco, and Holly Krieger for helpful conversations related to this project. I also thank Arnaud Plessis for useful comments on a draft of the paper.

\section{Notation and Background}\label{section:background}

\setlength{\tabcolsep}{15pt} 

\begin{longtable}{r p{10cm}} $K$ & unless otherwise stated, either a number field or a finite extension of the rational function field $k(t)$ over a characteristic $0$ field $k$ assumed without loss to be algebraically closed\\ $F$ & $[K:\QQ]$ is $K$ is a number field, and $k(t)$ if $K$ is a function field \\ $M_K$ & a complete set of inequivalent places of $K$, with absolute values $|\cdot|_v$ normalized to extend the standard absolute values on $\QQ$ if $K$ is a number field, or $k(t)$ if $K$ is a function field\\ $M_K^0$ & the set of nonarchimedean places of $K$ \\ $M_K^\infty$ & the set of archimedean places of $K$ \\ $\mathscr{S}_d$ & the set of places $v\in M_K$ such that $v$ lies above some prime integer less than or equal to $d$ \\ $k_{\mathfrak{p}}$ & the residue field associated to the finite prime $\mathfrak{p}$ of $K$ \\
	$N_\mathfrak{p}$ & $\frac{[k_\mathfrak{p}:f_\mathfrak{p}]}{[K:F]}$, where $k_{\mathfrak{p}}/f_\mathfrak{p}$ is the residual extension associated to the finite prime $\mathfrak{p}$ of $K$ \\ $r_v$ & $\frac{[K_v:F_v]}{[K:F]}$ \\ $v_\mathfrak{p}$ & the standard $\mathfrak{p}$-adic valuation on $K$
\end{longtable} 

\noindent If $K$ is a number field, let $\mathcal{O}_K$ denote the ring of integers. If $K$ is a function field, let $\mathcal{O}_K$ denote the integral closure of $k[t]$ in $K$.
If $K$ is a number field, $n\ge 2$ and $P=(z_1,\dots,z_n)\in\mathbb{P}^{n-1}(K)$ with $z_1,\dots,z_n\in K$, let \begin{equation*}\begin{split}h(P)=&\sum_{\textup{primes }\mathfrak{p} \textup{ of } \mathcal{O}_K} -\min\{v_{\mathfrak{p}}(z_1),\dots,v_{\mathfrak{p}}(z_n)\}N_{\mathfrak{p}}\\&+\dfrac{1}{[K:\QQ]} \sum_{\sigma:K\hookrightarrow\CC} \log\max\{|\sigma(z_1)|,\dots,|\sigma(z_n)|\},\end{split}\end{equation*} where we do not identify conjugate embeddings. (We choose to express the height in this form, which separates the nonarchimedean and archimedean contributions, for convenience in applying the $abc$-conjecture.) If $K$ is a function field, let \[h(P)=\sum_{\textup{primes }\mathfrak{p} \textup{ of } \mathcal{O}_K} -\min\{v_{\mathfrak{p}}(z_1),\dots,v_{\mathfrak{p}}(z_n)\}N_{\mathfrak{p}}.\] For any $P=(z_1,\dots,z_n)\in\mathbb{P}^{n-1}(K)$ with $z_1,\dots,z_n\in K^*$, we define \[I(P)=\{\textup{primes }\mathfrak{p} \textup{ of } \mathcal{O}_K\mid v_{\mathfrak{p}}(z_i)\ne v_{\mathfrak{p}}(z_j)\textup{ for some } 1\le i,j\le n\}\] and let \[\textup{rad}_K(P)=\sum_{\mathfrak{p}\in I(P)} N_{\mathfrak{p}},\] where the subscript $K$ emphasizes the dependence of this quantity on $K$. 

Theorems \ref{thm:UBCsuperattracting} and \ref{thm:mincanht} will make use of the $abc$-conjecture for a number field $K$. \begin{conj}[The $abc$-conjecture]{\label{conj:nconj}} Let $K$ be a number field, and let $[Z_1:Z_2:Z_3]$ be the standard homogeneous coordinates on $\mathbb{P}^2(K)$. For any $\epsilon>0$, there is a constant $C_{K,\epsilon}$ such that for all $P=(z_1,z_2,z_3)\in\mathbb{P}^2(K)$ with nonzero coordinates satisfying $z_1+z_2+z_3=0$, we have \[h(P)<(1+\epsilon)\textup{rad}_K(P)+C_{K,\epsilon}.\]\end{conj}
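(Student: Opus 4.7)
The $abc$-conjecture over number fields is one of the celebrated open problems in Diophantine geometry, so any proof proposal here is necessarily an outline of known strategies rather than a plan I can execute. The cleanest model is the function-field analogue, where for coprime $a,b,c\in k[t]$ with $a+b+c=0$ one studies the Wronskian $W(a,b)=ab'-a'b$, uses the identities $W(a,b)=W(b,c)=W(c,a)$, and observes that every root of $abc$ divides the Wronskian; this yields $\max(\deg a,\deg b,\deg c)\le\deg\operatorname{rad}(abc)-1$, without any $\epsilon$ and without any additive constant. Transposing the argument to a number field $K$ requires an Arakelov-theoretic substitute for differentiation at the archimedean places; work of Szpiro and others clarifies what such a substitute would need to accomplish, but does not produce it.

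A more structural route would be to deduce Conjecture \ref{conj:nconj} from Vojta's height inequality on $\mathbb{P}^1$ relative to the marked divisor $D=\{0,1,\infty\}$ over $K$. For $P=(z_1,z_2,z_3)\in\mathbb{P}^2(K)$ with $z_1+z_2+z_3=0$ and all coordinates nonzero, the point $\alpha=-z_1/z_2\in\mathbb{P}^1(K)\setminus D$ has truncated counting function against $D$ controlled by $\operatorname{rad}_K(P)$, while its absolute Weil height is within $O(1)$ of $h(P)$. A Vojta-type bound asserting that $h(\alpha)$ is at most $(1+\epsilon)$ times the truncated counting function plus a constant would therefore imply Conjecture \ref{conj:nconj} directly. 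The concrete step I would attempt is to construct, via Riemann--Roch or a geometry-of-numbers pigeonhole, an auxiliary global section of a suitable line bundle vanishing to high order along $D$, and then apply a product theorem to extract the height inequality from it.

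The principal obstacle, shared by every such reduction --- and equally by the equivalent reformulations in terms of Szpiro's conjecture on elliptic curve discriminants or Belyi-pair estimates --- is that we presently have no archimedean Diophantine approximation input of the required strength. The function-field proof has a direct Nevanlinna-theoretic analogue in the complex analytic setting, but over a number field no theorem is known that can play the same role; the archimedean contribution to $h(P)$ in the definition above is precisely the term that any successful argument would have to control and that resists control by current methods. For this reason I would not attempt to prove Conjecture \ref{conj:nconj} inside the present paper; as the hypotheses of Theorems \ref{thm:UBCsuperattracting} and \ref{thm:mincanht} indicate, it will instead be invoked as a standing assumption.
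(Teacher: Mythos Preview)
Your assessment is correct and matches the paper exactly: the statement is a conjecture, not a theorem, and the paper offers no proof of it whatsoever---it is simply stated and then invoked as a hypothesis in Theorems \ref{thm:UBCsuperattracting} and \ref{thm:mincanht}. Your concluding sentence captures precisely how the paper treats it.
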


For $K/k(t)$ a function field of characteristic zero, the $abc$-conjecture was proved independently by Mason \cite{Mason} and Silverman \cite{Silverman:abc}.

\subsection{Nonarchimedean analysis and quantitative equidistribution} We now introduce concepts from non-archimedean dynamics that will be used in proving our main results. Let $(\CC_v,|\cdot|_v)$ be a complete algebraically closed nontrivially valued field. For $f(z)\in\CC_v[z]$ of degree $d\ge2$ and $z\in\C_v$, let \[\hat{\lambda}_v(z)=\lim_{n\to\infty} \fr{1}{d^n}\log^+|f^n(z)|_v\] be the standard $v$-adic escape-rate function. (See \cite[\S3.4, 3.5]{Silverman3} for a proof that the limit defining $\hat{\lambda}_v(z)$ exists.) Note that $\hat{\lambda}_v(z)$ obeys the transformation rule \begin{equation*}\hat{\lambda}_v(f(z))=d\hat{\lambda}_v(z)\end{equation*} for all $z\in \CC_v$. 

Suppose $|\cdot|_v$ is nonarchimedean. For every $g\in\mathbb{R}_{>0}$, Proposition \ref{prop:nonarchdiskpreimage} implies that the set of points $z\in\CC_v$ such that $\hat{\lambda}_v(z)\le g$ is a finite union of $r_g$ disjoint closed disks. Call $g\in\mathbb{R}_{>0}$ the ($v$-adic) \emph{splitting potential} of $f$ if $r_g=1$ and $r_{g'}>1$ for any $g'<g$. If $g$ is the splitting potential of $f$, the (logarithmic $v$-adic) \emph{splitting radius} is the log of the radius of the disk $\{z\in\CC_v:\hat{\lambda}_v(z)\le g\}$. 

A place $v\in M_K^0$ is said to be a place of \emph{bad reduction}, or alternatively, a \emph{bad place} for $f\in K[z]$ if it has a splitting potential. If $R_f$ is the set of finite critical points of $f$, the \emph{$v$-adic critical height} is \[\lambda_{\textup{crit},v}(f):=\max_{a\in R_f}\{\hat{\lambda}_v(a)\}.\] For $f(z)\in K[z]$, let \begin{equation}\label{eqn:hcrit}h_{\textup{crit}}(f)=\sum_{v\in M_K}r_v\lambda_{\textup{crit},v}(f).\end{equation} Note that the critical height is usually defined as \[\sum_{P\in\mathbb{P}^1(\bar{K})}(e_f(P)-1)\hat{h}_f(P),\] where $e_f(P)$ denotes the ramification index of $f$ at $P$. The latter definition is easily seen to be comparable to (\ref{eqn:hcrit}), but is less natural from a dynamical point of view. When $v\in M_K^0\setminus\mathscr{S}_d$ is a place of bad reduction for a monic polynomial $f(z)\in K[z]$ of degree $d$, the splitting radius is equal to the splitting potential, which in turn equals the $v$-adic critical height \cite[Lemmas 2.1 and 2.2]{Ingram:PCF}. 

We will be using a measure of the size of a set of bad places for a given polynomial $f(z)\in K[z]$.

\begin{definition*} For $0<\delta<1$, and $\Sigma\subseteq M_K^0$, a \emph{$\delta$-slice of places} $v\in\Sigma$ is a set $S$ of bad places $v\in\Sigma$ of $f$ such that \[\sum_{v\in S}r_v\lambda_{\textup{crit},v}(f)\ge\delta\sum_{v\in\Sigma}r_v\lambda_{\textup{crit},v}(f).\]\end{definition*}

In order to carry out our analysis in the nonarchimedean setting, we work in Berkovich space. The Berkovich affine line $\textbf{A}_v^1$ over $\CC_v$ is the collection of multiplicative seminorms on $\CC_v[T]$ which extend the norm $|\cdot|_v$ on $\CC_v$. Let $[\phi]_{D(a,r)}=\sup_{z\in D(a,r)}|\phi(z)|_v$ denote the sup-norm on the disk $D(a,r)$. The Berkovich classification theorem (see \cite[Theorem 2.2]{BakerRumely}) states that each seminorm $[\cdot]_x$ corresponds to an equivalence class of sequences of nested closed disks $\{D(a_i,r_i)\}$ in $\CC_v$, by the identification \[[\phi]_x:=\lim_{i\to\infty}[\phi]_{D(a_i,r_i)}.\] We define the action of polynomial $f\in\CC_v[T]$ on a point $x\in\textbf{A}_{v}^1$ by $[\phi]_{f(x)}=[\phi\circ f]_x$ for $\phi\in\CC_v[T]$. 

For $a\in\CC_v$, we define open and closed Berkovich disks of radius $r>0;$ \[\mathcal{B}(a,r)^-=\{x\in\textbf{A}_v^1:[T-a]_x<r\},\] \[\mathcal{B}(a,r)=\{x\in\textbf{A}_v^1:[T-a]_x\le r\}.\] An \emph{annulus} in $\mathbf{A}_v^1$ is a set of the form $\mathcal{B}\setminus\mathcal{B}'$ where $\mathcal{B}'\subsetneq\mathcal{B}$ are disks (either open or closed) of positive radius. A basis for the open sets of $\textbf{A}_v^1$ is given by sets of the form $\mathcal{B}(a,r)^-$ and $\mathcal{B}(a,r)^-\setminus\cup_{i=1}^N\mathcal{B}(a_i,r_i)$, where $a,a_i\in \CC_v$ and $r,r_i>0$. The Berkovich $v$-adic filled Julia set of $f(z)\in K[z]$ is defined as \[\mathcal{K}_v=\bigcup_{M>0}\{x\in\textbf{A}_v^1:[f^n(z)]_x\le M\text{ for all } n\ge0\}.\] A place $v\in M_K^0$ is a place of bad reduction for $f\in K[z]$ if and only if $\mathcal{K}_v$ is not a disk in $\mathbf{A}_v^1$.

For a finite set $T\subseteq\CC_v$, and $\delta_v(x,y)$ the Hsia kernel relative to $\infty$ (cf. \cite[Section 4.1]{BakerRumely}), let \[d_v(T)=\prod_{z_i\ne z_j}\delta_v(z_i,z_j)^{1/(n(n-1))}.\] For disks $\mathcal{B},\mathcal{B}'$, we write $\delta_v(\mathcal{B},\mathcal{B}')$ to mean $\text{sup}_{x\in\mathcal{B},y\in\mathcal{B}'}\delta_v(x,y)$. For a compact set $E\subseteq\mathbf{A}_v^1$, let \[\gamma_v(E)=\lim_{n\to\infty}\sup\left\{\prod_{i\ne j}\delta_v(z_i,z_j)^{1/(n(n-1))}:z_1,\dots,z_n\in E\right\}.\]

\begin{prop}\cite[Proposition 7.33]{Benedetto}{\label{prop:nonarchdiskpreimage}} Let $\mathcal{B}$ be a disk in $\mathbf{A}_v^1$, and let $\phi\in\CC_v[z]$ be a polynomial of degree $d\ge 1$. Then $\phi^{-1}(\mathcal{B})$ is a union $\mathcal{B}_1\cup\cdots\cup\mathcal{B}_m$ of disjoint disks, where $1\le m\le d$. Moreover, for each $i=1,\dots,m$, there is an integer $1\le d_i\le d$ such that every point in $\mathcal{B}$ has exactly $d_i$ pre-images in $\mathcal{B}_i$, counting multiplicity, and $d_1+\cdots+d_m=d$.
\end{prop}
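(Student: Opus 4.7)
My plan is to translate and factor. Writing $\mathcal{B} = \mathcal{B}(a,r)$ and $\psi(T) := \phi(T) - a$, the preimage $\phi^{-1}(\mathcal{B})$ equals $\{x \in \mathbf{A}_v^1 : [\psi]_x \le r\}$. Since $\CC_v$ is algebraically closed, I factor $\psi(T) = c \prod_{i=1}^d (T - \alpha_i)$, and use the multiplicativity of Berkovich seminorms on polynomials to obtain $[\psi]_x = |c|_v \prod_{i=1}^d [T - \alpha_i]_x$. Thus the preimage is controlled entirely by the sizes $[T-\alpha_i]_x$.

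Next, I construct the components by partitioning the roots. For each $\alpha \in \CC_v$ and $s > 0$, the Berkovich point associated to the disk $\mathcal{B}(\alpha, s)$ evaluates the linear polynomial $T - \alpha_i$ to the sup-norm $\max(|\alpha - \alpha_i|_v, s)$, so the function
\[
F_\alpha(s) \;:=\; |c|_v \prod_{i=1}^d \max(|\alpha - \alpha_i|_v, s)
\]
is continuous and piecewise monomial in $s$, tends to $0$ as $s \to 0$ when $\alpha$ is a root of $\psi$, and grows unboundedly as $s \to \infty$. For each root $\alpha$ let $s(\alpha)$ be the largest $s$ with $F_\alpha(s) = r$, and declare $\alpha \sim \alpha'$ iff $\mathcal{B}(\alpha, s(\alpha)) = \mathcal{B}(\alpha', s(\alpha'))$. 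The ultrametric trichotomy (any two closed Berkovich disks are nested or disjoint) then produces pairwise disjoint closed disks $\mathcal{B}_1, \dots, \mathcal{B}_m$, with $\mathcal{B}_j = \mathcal{B}(\beta_j, s_j)$ containing its equivalence class $S_j$ of $d_j := |S_j|$ roots and $\sum_j d_j = d$.

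Finally, I verify the three claimed properties. The inclusion $\bigcup_j \mathcal{B}_j \subseteq \psi^{-1}(\mathcal{B})$ is immediate because the maximum of $[\psi]_x$ on $\mathcal{B}_j$ equals $F_{\beta_j}(s_j) = r$ by construction. For the reverse inclusion, any $x \notin \bigcup_j \mathcal{B}_j$ strictly enlarges at least one factor $[T - \alpha_i]_x$ beyond the value it attains on whichever $\mathcal{B}_j$ contains $\alpha_i$, and the corresponding factor estimate on the product forces $[\psi]_x > r$. For the degree count, fix $b \in \CC_v$ with $|b - a|_v \le r$; the polynomial $\psi(T) - (b-a)$ has exactly $d$ roots in $\CC_v$, all of which lie in $\bigcup_j \mathcal{B}_j$ since each is a preimage of the point $b \in \mathcal{B}$. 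As $b$ varies over $\mathcal{B} \cap \CC_v$, these roots move continuously and cannot hop between the disjoint closed disks $\mathcal{B}_j$, so each $\mathcal{B}_j$ catches the same number $d_j$ at every $b$ as it does at $b = a$.

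The principal technical obstacle is the continuity-of-roots argument underpinning the degree count: one must quantify how the roots of $\psi - (b-a)$ move within each $\mathcal{B}_j$ as $b$ ranges over $\mathcal{B}$. I would handle this by applying the Newton polygon of $\psi(T + \beta_j) - (b-a)$ and exploiting the calibration $F_{\beta_j}(s_j) = r$, which guarantees that the leading-order growth of $\psi$ on $\mathcal{B}_j$ is exactly $|c|_v s_j^{d_j} \prod_{\alpha_i \notin \mathcal{B}_j} |\beta_j - \alpha_i|_v$ and hence precisely absorbs any perturbation by $b-a$ of absolute value at most $r$.
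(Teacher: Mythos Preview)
The paper does not prove this proposition at all; it is quoted from \cite[Proposition 7.33]{Benedetto} and used as a black box, so there is no in-paper argument to compare against. Your approach---factor $\psi$, partition its roots into the maximal closed disks on which $|\psi|_v\le r$, and then use the Newton polygon to read off the local mapping degree on each component---is the standard one and is essentially what appears in Benedetto.

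One step in your sketch is not yet justified as written. In the reverse inclusion you say that for $x\notin\bigcup_j\mathcal{B}_j$ ``the corresponding factor estimate on the product forces $[\psi]_x>r$,'' but enlarging a single factor $[T-\alpha_i]_x$ beyond $s_{j(i)}$ does not by itself control the product, since factors attached to roots in \emph{other} components may shrink. What makes the argument go through is to compare $x$ to the Gauss point $\zeta_k$ of the \emph{nearest} component $\mathcal{B}_k$, i.e.\ the $k$ minimising $t_j:=[T-\beta_j]_x$. For $\alpha_i\in\mathcal{B}_l$ one has $[T-\alpha_i]_x=t_l$ (since $|\alpha_i-\beta_l|_v\le s_l<t_l$), while the ultrametric inequality applied to $\beta_k-\beta_l=(T-\beta_l)-(T-\beta_k)$ gives $|\beta_k-\beta_l|_v\le\max(t_k,t_l)=t_l$, so $[T-\alpha_i]_x\ge[T-\alpha_i]_{\zeta_k}$ for every root, with strict inequality when $l=k$. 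Multiplying yields $[\psi]_x>[\psi]_{\zeta_k}=r$. With this refinement your reverse inclusion is complete, and your Newton-polygon plan for the constancy of the local degree $d_j$ is exactly the right way to make the ``continuity of roots'' heuristic rigorous.
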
 The $\mathcal{B}_i$ in Proposition \ref{prop:nonarchdiskpreimage} are referred to as the \emph{disk components} of $\phi^{-1}(\mathcal{B})$. 

One tool we will use is a proposition controlling how the modulus of an annulus transforms under a covering map. Given an annulus $\mathcal{A}:=\mathcal{B}\setminus\mathcal{B}'$, the modulus of $\mathcal{A}$ is by definition $\log\delta_v(\mathcal{B},\mathcal{B})-\log\delta_v(\mathcal{B}',\mathcal{B}')$.

\begin{prop}\cite[Corollary 2.6]{Baker}{\label{prop:nonarchmoduli}} If $\phi\in\C_v(z)$ is a degree $k$ covering map $\phi:\mathcal{A}_1\to \mathcal{A}_2$ of annuli in $\mathbf{A}_v^1$, then \[\textup{mod}(\mathcal{A}_1)=\fr{1}{k}\textup{mod}(\mathcal{A}_2).\] \end{prop}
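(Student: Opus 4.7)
The plan is to prove this by reducing to a normal form in which $\phi$ looks like a power map, and then computing the modulus directly. First, since affine linear (in fact M\"obius) changes of coordinates on source and target preserve both the modulus of an annulus and the degree of a covering, I would normalize so that $\mathcal{A}_1 = \mathcal{B}(0,s_2)\setminus\mathcal{B}(0,s_1)^-$ is a standard annulus centered at the origin, and similarly $\mathcal{A}_2$ is centered at the origin. Moreover, by restricting to a neighborhood of $\mathcal{A}_1$ not containing any poles of $\phi$, I may assume $\phi$ is a polynomial when working locally (poles sit in a different disk component).

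Next I would use Proposition \ref{prop:nonarchdiskpreimage} to unpack the covering hypothesis. Write $\mathcal{A}_2 = \mathcal{B}_2\setminus\mathcal{B}_2'$. The preimages $\phi^{-1}(\mathcal{B}_2)$ and $\phi^{-1}(\mathcal{B}_2')$ are each finite unions of disjoint disks whose local degrees sum to $\deg\phi$. Since $\phi$ restricts to a surjection $\mathcal{A}_1\to\mathcal{A}_2$ of degree $k$ and $\mathcal{A}_1$ is a connected annulus, exactly one disk component $\mathcal{B}_1$ of $\phi^{-1}(\mathcal{B}_2)$ can contain $\mathcal{A}_1$, and exactly one disk component $\mathcal{B}_1'$ of $\phi^{-1}(\mathcal{B}_2')$ must sit inside $\mathcal{B}_1$, forming the inner boundary; both local degrees are then forced to equal $k$, and $\mathcal{A}_1 = \mathcal{B}_1\setminus\mathcal{B}_1'$.

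The crux of the argument is the normal-form claim: on $\mathcal{B}_1$, after recentering, $\phi$ acts on each Gauss point $\zeta_s$ of radius $s \in [s_1,s_2]$ by $\phi(\zeta_s) = \zeta_{|c|s^k}$ for a fixed constant $c\in\C_v^\times$. I would establish this via a Newton polygon argument: after writing $\phi(z) - w = \sum a_i(z-a)^i$ for a center $a$ of $\mathcal{B}_1$ and $w$ varying over $\mathcal{B}_2$, the hypothesis that $\phi-w$ has all $k$ roots in $\mathcal{B}_1$ with common valuation (and no other roots in a slightly larger disk, which would contradict the covering property) forces the relevant portion of the Newton polygon to consist of a single slope of horizontal length $k$. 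This pins the dominant behavior of $\phi$ on $\mathcal{B}_1$ to that of $z\mapsto c(z-a)^k$, with lower-order terms too small in valuation to perturb the image of any Gauss point $\zeta_s$ with $s\in[s_1,s_2]$. This normal-form step is the main obstacle; everything else is bookkeeping.

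Granting the normal form, the modulus computation is immediate. We have $\mathcal{A}_2 = \mathcal{B}(0,|c|s_2^k)\setminus\mathcal{B}(0,|c|s_1^k)^-$, so
\[
\textup{mod}(\mathcal{A}_2) = \log(|c|s_2^k) - \log(|c|s_1^k) = k(\log s_2 - \log s_1) = k\cdot\textup{mod}(\mathcal{A}_1),
\]
which is the desired identity $\textup{mod}(\mathcal{A}_1) = \frac{1}{k}\textup{mod}(\mathcal{A}_2)$.
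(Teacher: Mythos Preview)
The paper does not supply a proof of this proposition; it is quoted verbatim as \cite[Corollary 2.6]{Baker} and used as a black box. So there is no in-paper argument to compare against, and your proposal is offering a proof where the paper gives none.

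Your overall strategy---normalize the annuli, show that on the skeleton $\phi$ behaves like $z\mapsto cz^{\pm k}$, then read off the modulus relation---is the right one, and the final computation is correct. There is, however, a genuine gap in your reduction step. You write that by choosing a neighborhood of $\mathcal{A}_1$ avoiding the poles of $\phi$ you may assume $\phi$ is a polynomial. This is not justified: the hypothesis only guarantees that $\phi$ has no poles on the annulus $\mathcal{A}_1$ itself, but $\phi$ may well have poles inside the inner disk $\mathcal{B}_1'$, and an affine change of coordinates will not move them out. In that case $\phi$ is not polynomial on $\mathcal{B}_1$, your Taylor expansion about a center of $\mathcal{B}_1$ is unavailable, and the Newton polygon argument as written does not apply. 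Relatedly, your use of Proposition~\ref{prop:nonarchdiskpreimage} presumes $\phi$ is a polynomial.

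The fix is to work with the Laurent expansion of $\phi$ on the annulus (which exists precisely because $\phi$ is analytic there) and to run the valuation-polygon argument on that two-sided series. The covering hypothesis forces a single dominant term of exponent $\pm k$ throughout the range $s\in[s_1,s_2]$, giving $[\phi]_{\zeta_s}=|c|\,s^{\pm k}$; either sign yields $\textup{mod}(\mathcal{A}_2)=k\cdot\textup{mod}(\mathcal{A}_1)$. You should also note explicitly that the orientation-reversing case (dominant exponent $-k$) can occur and is covered by the same computation. This is essentially how \cite{Baker} proceeds, albeit phrased in the language of skeleta and piecewise-linear maps rather than Newton polygons.
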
 

For $f(z)\in\CC_v[z]$ be a monic polynomial having bad reduction, let $g_v$ denote the splitting radius of $f$. Let $\mathcal{E}$ be the Berkovich closure of the disk $\{z\in\CC_v:\log|z|_v\le g_v\}$. This is the smallest Berkovich disk containing $\mathcal{K}_v$. For all $m\ge0$, let \[\mathcal{E}_m=f^{-m}(\mathcal{E})=\bigcup_{i=1}^{d^m}\mathcal{B}_{m,i},\] where $\mathcal{B}_{m,1},\dots,\mathcal{B}_{m,d^m}$ are the disk components of $\mathcal{E}_m$, listed with multiplicity. 

A \emph{wing decomposition} of $\mathcal{E}_1$ is a partition of $\mathcal{E}_1$ into two nonempty disjoint sets (wings) $A$ and $B$ with the following properties:
\begin{itemize}\item $A$ and $B$ are unions of disk components of $\mathcal{E}_1$ \item For any disk components $\mathcal{B}_{1,i}$ of $A$ and $\mathcal{B}_{1,j}$ of $B$, we have $\log\delta_v(\mathcal{B}_{1,j},\mathcal{B}_{1,l})=g_v$. \end{itemize} A wing decomposition is not unique in general.

We introduce a definition that will serve as a way to measure the level of equidistribution of sets of points of $K$ at places of bad reduction of $f\in\mathfrak{S}_{d,m}(K)$. 

\begin{definition*} Let $\epsilon>0$, let $m_0\ge1$, and let $v$ be a place of bad reduction for $f\in\mathfrak{S}_{d,m}(K)$ with finite superattracting point $p_0$ of period $m$. Let $\mathcal{E}$ be the smallest disk containing $\mathcal{K}_v$. Let $\mathcal{A}(m_0)$ be the half-open annulus $\mathcal{B}\setminus\mathcal{B}'$, where $\mathcal{B}$ is the disk component of $f^{-m_0\cdot m}(\mathcal{E})$ containing $p_0$, and $\mathcal{B}'$ is the disk component of $f^{-(m_0+1)m}(\mathcal{E})$ containing $p_0$. Let $\mu_v$ be the equilibrium measure on $\mathcal{K}_v$. We say that a finite set $T\subseteq\textbf{A}_v^1$ is $\epsilon$-equidistributed at level $m_0$ if \[(1-\epsilon)\mu_v(\mathcal{A}(m_0)\cap\mathcal{K}_v)<\frac{|T\cap\mathcal{A}(m_0)|}{|T|}<(1+\epsilon)\mu_v(\mathcal{A}(m_0)\cap\mathcal{K}_v),\] and if for any wing decomposition of $\mathcal{E}_1$ as above, we have \[|T\cap A|>\left(\frac{1-\epsilon}{d}\right)|T|.\] \end{definition*}

An analogue of the proof of \cite[Theorem 3.1]{Looper:UBCpolys} leads to the following theorem, which is key to the proofs of Theorems \ref{thm:UBCsuperattracting} and \ref{thm:mincanht}. 

\begin{thm}[Global quantitative equidistribution]\label{thm:globaleq}Let $d\ge3$, and let $f\in\mathfrak{S}_{d,m}(K)$ have finite superattracting point $p_0$ of period $m$. Let $\epsilon>0$, $m_0\ge1$, and $0<\delta<1$. Let $T\subseteq K$ be a finite set. There are constants $N$ and $\kappa>0$, depending only on $d$, $[K:F]$, $\delta$, $m$, $m_0$, and $\epsilon$, such that if $|T|\ge N$ and \[\frac{1}{|T|}\sum_{P_i\in T}\hat{h}_f(P_i)\le\kappa h_{\textup{crit}}(f),\] then $T$ is $\epsilon$-equidistributed at level $m_0$ for a $\delta$-slice of bad places $v\in M_K^0\setminus\mathscr{S}_d$.\end{thm}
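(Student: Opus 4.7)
The plan is to adapt the strategy of the proof of \cite[Theorem 3.1]{Looper:UBCpolys} to the quantitative equidistribution statement needed here. Starting from the adelic decomposition $\hat{h}_f(P) = \sum_{v\in M_K} r_v \hat{\lambda}_v(P)$, I would invoke an energy/Fekete-type global inequality of the form
\begin{equation*}
\frac{1}{|T|}\sum_{P\in T} \hat{h}_f(P) \;\ge\; \sum_{v\in M_K^0\setminus \mathscr{S}_d} r_v\, D_v(T) \;-\; \frac{C\log|T|}{|T|},
\end{equation*}
where $D_v(T)\ge 0$ quantifies the potential-theoretic deviation of the empirical counting measure of $T$ from the equilibrium measure $\mu_v$ on $\mathcal{K}_v$. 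At places of good reduction $D_v = 0$, and the places in $\mathscr{S}_d$ (which may exhibit ramification-type complications) are absorbed into the error term.

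The technical heart of the proof is the local gap estimate: for each bad place $v\in M_K^0\setminus\mathscr{S}_d$, if $T$ is not $\epsilon$-equidistributed at level $m_0$, then
\begin{equation*}
D_v(T) \;\ge\; c(\epsilon, m_0, d, m)\cdot \lambda_{\textup{crit},v}(f)
\end{equation*}
for a positive constant $c$ independent of $f$. To prove this I would use the wing-and-annulus geometry of the filled Julia set: Proposition \ref{prop:nonarchdiskpreimage} ensures $\mathcal{E}_1$ breaks into finitely many disk components admitting a wing decomposition in which each wing carries $\mu_v$-mass comparable to $1/d$, separated by an annulus of modulus $g_v = \lambda_{\textup{crit},v}(f)$; Proposition \ref{prop:nonarchmoduli} then lets me propagate this through iterates, so the annulus $\mathcal{A}(m_0)$ around the superattracting point $p_0$ has modulus proportional to $\lambda_{\textup{crit},v}(f)/d^{m_0 m}$ and $\mu_v$-mass comparable to $1/d^{m_0 m}$. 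A miscount of the empirical measure across either a wing cut or the annulus $\mathcal{A}(m_0)$ then produces, by a Fekete-type logarithmic energy computation, an energy deficit of order $\epsilon\cdot\lambda_{\textup{crit},v}(f)$.

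Assume for contradiction that $T$ fails to be $\epsilon$-equidistributed at level $m_0$ outside some $\delta$-slice. Then the offending bad places carry weight at least $(1-\delta)h_{\textup{crit}}(f)$ in the sum defining $h_{\textup{crit}}(f)$, so summing the local gap yields
\begin{equation*}
\frac{1}{|T|}\sum_{P\in T}\hat{h}_f(P) \;\ge\; c(\epsilon, m_0, d, m)(1-\delta)\, h_{\textup{crit}}(f) \;-\; \frac{C\log|T|}{|T|}.
\end{equation*}
Choosing $\kappa = \tfrac{1}{2} c(\epsilon, m_0, d, m)(1-\delta)$ and $N$ large enough that the error term is absorbed then contradicts the height hypothesis.

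The principal obstacle is producing the uniform local gap estimate. The wing decomposition is non-unique, and the definition of $\epsilon$-equidistribution quantifies over \emph{every} valid decomposition, so the discrepancy bound must hold for whichever wing configuration witnesses the failure. Moreover, the constant $c$ must depend only on $d, m, m_0, \epsilon$ and not on the fine structure of $\mathcal{K}_v$, which varies substantially with $f$ across $\mathfrak{S}_{d,m}(K)$; arranging the Fekete-energy computation with genuinely $f$-independent constants, in the manner of \cite[\S3]{Looper:UBCpolys}, is where the real work lies.
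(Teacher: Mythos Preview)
The paper does not supply an independent proof of this theorem; it simply asserts that the result follows by adapting the argument of \cite[Theorem 3.1]{Looper:UBCpolys}. Your proposal follows exactly this route---the adelic energy inequality, the local gap estimate via the wing/annulus geometry of $\mathcal{K}_v$, and the $\delta$-slice contradiction---so it is aligned with what the paper intends, and the obstacles you flag (uniformity of the constant $c$ in $f$, handling the quantification over all wing decompositions) are precisely the points where the adaptation from \cite{Looper:UBCpolys} requires care.
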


\section{Filled Julia sets for maps in $\mathfrak{S}_{d,m}$}

In this section, we describe key features of filled Julia sets for polynomials having a finite superattracting periodic point, and a nonarchimedean place of bad reduction. At the place of bad reduction, these filled Julia sets have a `splash' emanating from the immediate basin of attraction of the superattracting periodic point. An archimedean version of this phenomenon can be seen in Figure \ref{figure1}. The results in this section apply when $K$ is any product formula field.

\begin{figure}[H]
	\centering
	\begin{subfigure}[t]{0.52\textwidth}
		\includegraphics[scale=0.45]{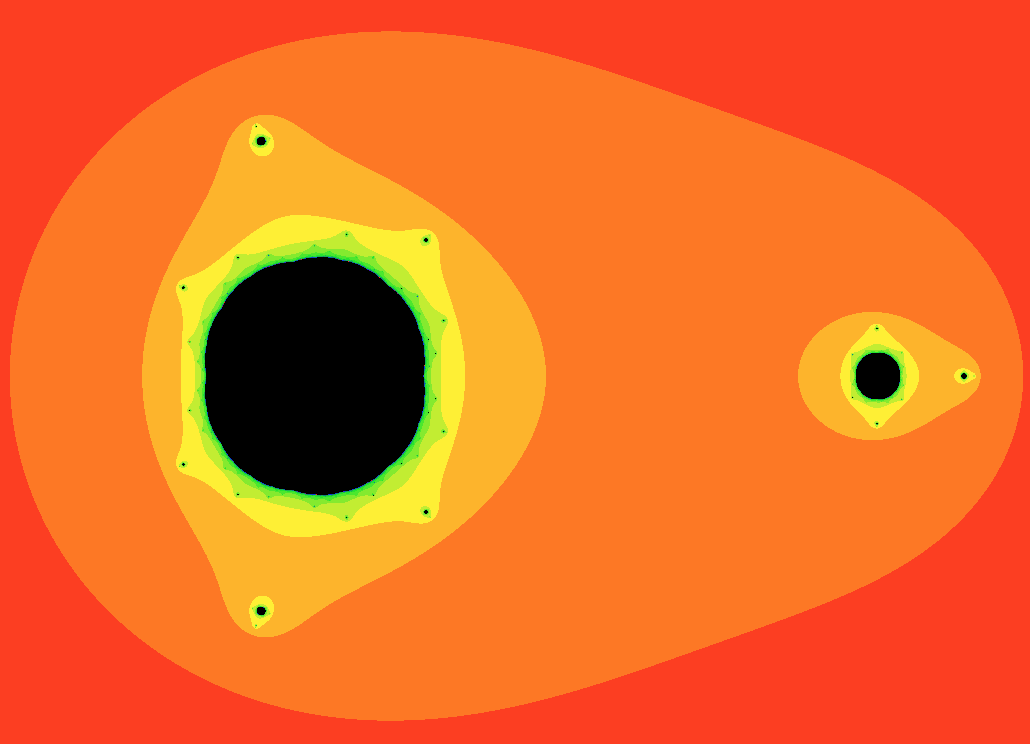}
		\caption{The equipotential curves of the map \\ $f(z)=\fr{1}{5}z^3-z^2$, at $v=\infty$.}
		\label{figure:splash}
	\end{subfigure}%
	~
	\begin{subfigure}[t]{0.52\textwidth}
		\includegraphics[scale=0.438]{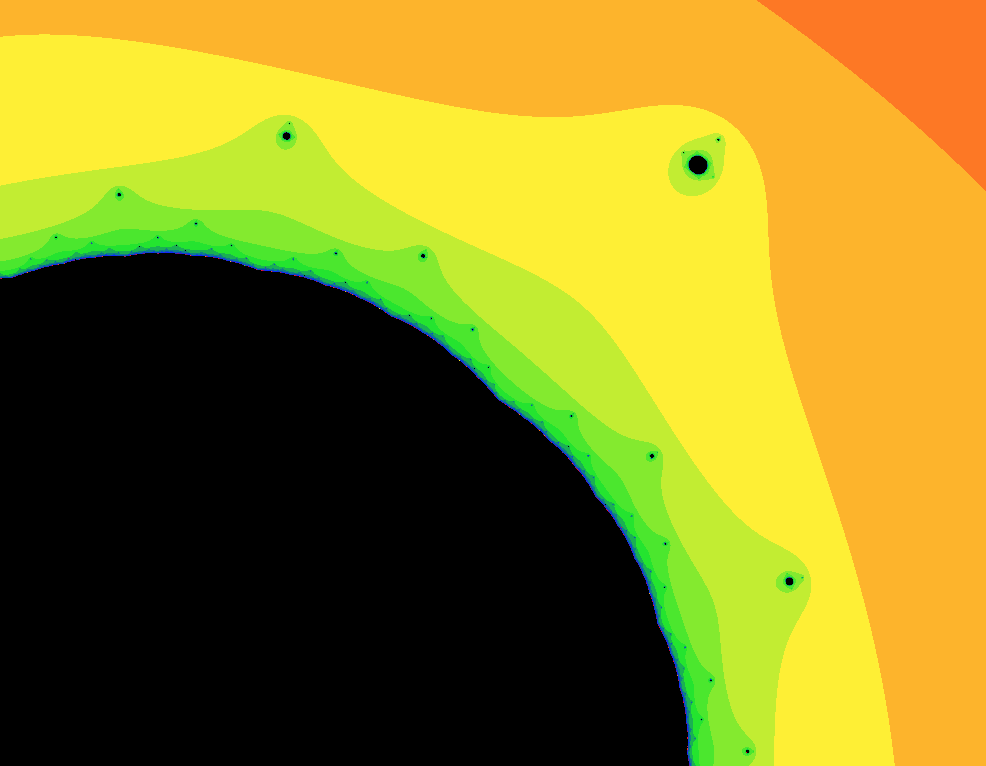}
		\caption{A zoomed in view of the immediate basin of $0$ in Figure \ref{figure:splash}.}
	\end{subfigure}
	\caption{\label{figure1}}
\end{figure}

We simplify the presentation by referring to maps that have a finite superattracting \emph{fixed} point, which eliminates the need to treat superattractors of different periods. 

\begin{prop}\label{prop:shape}Let $f(z)\in K[z]$ be a polynomial of degree $d\ge3$ with a finite superattracting fixed point $p_0$, and a nonarchimedean place $v$ of bad reduction. Let $r>0$ be the radius of the largest disk about $p_0$ contained in $\mathcal{K}_v$. There is a sequence of Berkovich disks $\mathcal{B}_i\subseteq\mathcal{K}_v$ such that $\delta_v(z_i,p_0)=r_i>r$ for all $z_i\in\mathcal{B}_i$, and $r_i\to r$ as $i\to\infty$.\end{prop}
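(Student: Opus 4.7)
My approach is to use the local expansion of $f$ at $p_0$ to construct, iteratively, a sequence of disk components of the pullbacks $f^{-n}(\mathcal{B}_0)$ whose distances from $p_0$ decrease monotonically to $r$. After translating so that $p_0 = 0$, I would write $f(z) = a_e z^e + \cdots + a_d z^d$ with $e \geq 2$ the local degree at $0$ and $a_e, a_d \neq 0$, and first verify that $f(\mathcal{B}_0) = \mathcal{B}_0$: total invariance of $\mathcal{K}_v$ makes $f(\mathcal{B}_0) \subseteq \mathcal{K}_v$ a disk containing $0 = f(0)$, and hence contained in $\mathcal{B}_0 := \mathcal{B}(0,r)$ by maximality; conversely the component of $f^{-1}(\mathcal{B}_0)$ containing $0$ is a disk in $\mathcal{K}_v$ containing $0$ and thus equals $\mathcal{B}_0$. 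Matching the radii of $\mathcal{B}_0$ and $f(\mathcal{B}_0)$ then forces $|a_e|_v = r^{1-e}$.

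Next I would exhibit an initial ``splash.'' Because $v$ is a place of bad reduction, $\mathcal{K}_v$ is not a disk, and the only way $f^{-1}(\mathcal{B}_0)$ could coincide with $\mathcal{B}_0$ would be if $f(z) = a_d z^d$ with $e = d$, which would make $\mathcal{K}_v$ itself a disk. Let $s_1 > r$ be the smallest value at which $|a_k|_v s_1^k = |a_e|_v s_1^e$ for some $k > e$; the Newton polygon of $f$ produces roots of $f$ at $|z|_v = s_1$, each of which is contained, by Proposition \ref{prop:nonarchdiskpreimage}, in a disk component $\mathcal{B}^{(1)} \subseteq f^{-1}(\mathcal{B}_0) \subseteq \mathcal{K}_v$ that is disjoint from $\mathcal{B}_0$ and satisfies $|z|_v = R^{(1)} := s_1$ for every $z \in \mathcal{B}^{(1)}$.

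The heart of the argument is an iterative pullback: for any disk component $\mathcal{C} \subseteq \mathcal{K}_v$ of some $f^{-n}(\mathcal{B}_0)$ with $|z|_v = R \in (r, s_1]$ for $z \in \mathcal{C}$, I claim there is a disk component $\mathcal{C}' \subseteq f^{-1}(\mathcal{C})$ with $|z|_v = T(R) := (R\, r^{e-1})^{1/e} \in (r, R)$ for all $z \in \mathcal{C}'$. In the annular range $|z|_v \in (r, s_1)$ the monomial $a_e z^e$ strictly dominates, so $|f(z)|_v = |z|_v^e / r^{e-1}$ with no cancellation; for a classical $w \in \mathcal{C}$ with $|w|_v = R$, each of the $e$ solutions to $a_e z^e = w$ lies at $|z|_v = T(R)$, and Proposition \ref{prop:nonarchdiskpreimage} then produces a corresponding disk component of $f^{-1}(\mathcal{C})$. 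Iterating from $R^{(1)} = s_1$ gives $R^{(n+1)} = T(R^{(n)})$; since $r$ is the attracting fixed point of $T$ with $T'(r) = 1/e < 1$, the closed form $R^{(n)} = s_1^{1/e^{n-1}} r^{1 - 1/e^{n-1}}$ yields $R^{(n)} \downarrow r$, while monotonicity $T(R) < R$ for $R > r$ keeps the iteration inside $(r, s_1)$, so $\mathcal{B}_i := \mathcal{B}^{(i)}$ satisfies the conclusion.

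The principal obstacle is making the local-analysis step fully rigorous in the Berkovich framework: one must confirm that the $e$-th-root construction genuinely produces disk components of $f^{-1}(\mathcal{C})$ at the claimed distance, with Proposition \ref{prop:nonarchdiskpreimage} controlling their radii and ensuring they remain inside $\mathcal{K}_v$. The strict non-archimedean domination of $a_e z^e$ throughout the range $(r, s_1)$ is precisely what rules out unexpected cancellation from lower-order monomials and keeps the recursion well-defined.
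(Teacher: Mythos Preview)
Your strategy---pull back the immediate basin $\mathcal{B}_0$ rather than the outer disk $\mathcal{E}$, and track distances via the Newton polygon at $p_0$---is viable, but the step ``matching the radii of $\mathcal{B}_0$ and $f(\mathcal{B}_0)$ then forces $|a_e|_v = r^{1-e}$'' has a genuine gap. Matching radii gives only $\max_j |a_j|_v r^j = r$; nothing forces the maximum to occur at $j=e$. Equivalently, the Weierstrass degree $k$ of $f$ on $\mathcal{B}_0$ can exceed the local degree $e$. A concrete instance over $\mathbb{Q}_p$ is $f(z)=z^2(z-p)(z-1/p)=z^4-(p+1/p)z^3+z^2$: here $e=2$, the immediate basin is $\mathcal{B}_0=\mathcal{B}(0,p^{-1/2})$, the Weierstrass degree on $\mathcal{B}_0$ is $3$, and $v$ is a place of bad reduction since $f^{-1}(\mathcal{B}_0)$ has a fourth sheet near $1/p$. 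In this example $|a_2|_v=1\neq p^{1/2}=r^{1-e}$, and your $s_1$ (where $|a_j|_v s^j$ first meets $|a_e|_v s^e$) satisfies $s_1=1/p<r$, so the construction collapses. The repair is routine: replace $e$ everywhere by the Weierstrass degree $k$; then $|a_k|_v=r^{1-k}$, the monomial $a_kz^k$ dominates on an annulus $(r,s_1')$, and the recursion $T(R)=(Rr^{k-1})^{1/k}$ still has $r$ as its attracting fixed point. (A smaller slip: $f^{-1}(\mathcal{B}_0)=\mathcal{B}_0$ does \emph{not} imply $f(z)=a_dz^d$---take $f(z)=z^2(z-\alpha)$ with $|\alpha|_v<1$---but it does imply directly that $\mathcal{K}_v=\mathcal{B}_0$, which is all you need.)

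The paper's proof proceeds quite differently: it works from the outside in via the filtration $\mathcal{E}_n=f^{-n}(\mathcal{E})$ by the smallest disk containing $\mathcal{K}_v$. Bad reduction supplies a second component $\mathcal{B}_{1,2}$ of $\mathcal{E}_1$; since $\mathcal{B}_{1,2}$ surjects onto $\mathcal{E}$, it contains a preimage $\mathcal{D}_1$ of the component $\mathcal{B}_{1,1}\ni p_0$, and one then pulls $\mathcal{D}_1$ back successively into $\mathcal{B}_{1,1},\mathcal{B}_{2,1},\mathcal{B}_{3,1},\ldots$. Because $\bigcap_n\mathcal{B}_{n,1}$ is exactly the basin $\mathcal{B}(p_0,r)$, the distances from $p_0$ shrink to $r$. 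This is shorter and sidesteps the Weierstrass-degree issue entirely; your (corrected) argument has the compensating advantage of giving an explicit rate $r_i=r\cdot(s_1'/r)^{1/k^{i-1}}$.
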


\begin{proof} Write \[\mathcal{E}_1=\bigcup_{i=1}^r\mathcal{B}_{1,i},\] where the $\mathcal{B}_{1,i}$ are disjoint and $\mathcal{B}_{1,1}$ is the disk component of $\mathcal{E}_1$ containing $p_0$. Since $\mathcal{B}_{1,2}$ maps over $\mathcal{E}_0$, it contains an $f$-preimage $\mathcal{D}_1$ of $\mathcal{B}_{1,1}$. Similarly, $\mathcal{B}_{1,1}$ contains an $f$-preimage $\mathcal{D}_2$ of $\mathcal{D}_1$. If $\mathcal{B}_{2,1}$ is a disk component of $\mathcal{E}_2$ containing $p_0$ and mapping over $\mathcal{B}_{1,1}$, then $\mathcal{B}_{2,1}$ contains an $f$-preimage $\mathcal{D}_3$ of $\mathcal{D}_2$. If $\mathcal{B}_{3,1}$ is the disk component of $\mathcal{E}_3$ containing $p_0$, then $\mathcal{B}_{3,1}$ contains an $f$-preimage $\mathcal{D}_4$ of $\mathcal{D}_3$. Continuing in this manner, we produce a sequence of disks $\mathcal{D}_i$ whose distances $r_i$ from $p_0$ approach $r$ as $i\to\infty$.\end{proof}

From this we deduce two corollaries. In what follows, we fix an embedding of $\bar{K}$ into $\bar{K}_v$, and hence into $\mathbf{A}_v^1$.

\begin{cor}\label{cor:shape} Let $f(z)\in K[z]$ be a polynomial of degree $d\ge3$ with a finite superattracting periodic point, and a nonarchimedean place $v$ of bad reduction. Let $\{P_i\}$ be a sequence of points in $\bar{K}$ with $\hat{h}_f(P_i)\to 0$, and let $L^i$ denote the Galois closure of $K(P_i)/K$. For each $i$, let $\mathfrak{Q}_i$ be a prime of $\mathcal{O}_{L^i}$ lying above $v$. Then there is a subsequence $\{P_{i_j}\}$ with $|\Gal(L_{\mathfrak{Q}_{i_j}}^{i_j}/K_v)|\to\infty$ as $i_j\to\infty$.\end{cor}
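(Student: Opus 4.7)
My plan is to argue by contradiction. Suppose no such subsequence exists, so that $|\Gal(L^i_{\mathfrak{Q}_i}/K_v)| \leq N$ uniformly in $i$ for some $N$. After replacing $f$ by a suitable iterate (to make $p_0$ a fixed point rather than merely periodic) and enlarging $K$ by a bounded-degree extension (to ensure $p_0 \in K$), we may apply Proposition \ref{prop:shape} directly; the local Galois bound only grows by a bounded factor.

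The first step is to observe that because decomposition groups at different primes of $L^i$ above $v$ are all $\Gal(L^i/K)$-conjugate and hence of common order at most $N$, every $K$-Galois conjugate of $P_i$, viewed in $\bar K_v$ via the fixed embedding, lies in an extension of $K_v$ of degree at most $N$. Thus the entire orbit $\Gal(\bar K/K)\cdot P_i$ sits inside the compositum $K_v^{(N)}$ of all degree-$\leq N$ extensions of $K_v$. Since $K_v^{(N)}/K_v$ has ramification bounded by $M := \lcm(1,\dots,N)$, the value group $|K_v^{(N)}|^\times$ is a \emph{discrete} subgroup of $|\bar K_v|^\times$, namely the group of $M$-th roots of $|K_v|^\times$.

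Next I would invoke the splash structure. Proposition \ref{prop:shape} produces Berkovich disks $\mathcal{B}_j \subseteq \mathcal{K}_v$ with $\delta_v(z,p_0) = r_j > r$ for every $z \in \mathcal{B}_j$, and $r_j \to r$. Any convergent sequence in a discrete subgroup of $|\bar K_v|^\times$ is eventually constant, but $r_j > r$ forbids $r_j = r$; hence $r_j \in |K_v^{(N)}|^\times$ for only finitely many $j$. For the remaining (infinitely many) values of $j$, no element of $K_v^{(N)}$ lies at $v$-adic distance $r_j$ from $p_0 \in K$, so no Galois conjugate of any $P_i$ meets $\mathcal{B}_j$.

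To close the argument I would use equidistribution. Northcott's theorem combined with $\hat{h}_f(P_i) \to 0$ (and, after passing to a subsequence, distinctness of the $P_i$) forces $[K(P_i):K] \to \infty$. The nonarchimedean equidistribution theorem of Baker--Rumely, Chambert-Loir, and Favre--Rivera-Letelier then gives weak convergence of the normalized Galois-orbit measures at $v$ to the equilibrium measure $\mu_v$ on $\mathbf{A}_v^1$. Since each disk component of a preimage $\mathcal{E}_m$ carries positive $\mu_v$-mass, lower semicontinuity of weak convergence on open sets forces the Galois orbit of $P_i$ to eventually meet $\mathcal{B}_j$, contradicting the preceding paragraph. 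The main technical point I expect to handle carefully is the possibility that $\mu_v$ places atoms on the type-II Berkovich boundary of $\mathcal{B}_j$; this should be sidestepped by shrinking to a deeper open Berkovich sub-ball of positive mass and $\mu_v$-null boundary, using the abundance of splash disks at finer levels.
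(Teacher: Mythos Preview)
Your approach is essentially the paper's: both combine the splash structure of Proposition~\ref{prop:shape} with equidistribution at $v$ to force unbounded ramification, and your explicit value-group argument (distances $r_j\to r$ cannot all lie in a discrete subgroup of $|\bar K_v|^\times$) is precisely the mechanism behind the paper's terse ``it follows that the ramification index tends to infinity.'' Your treatment of the boundary-atom issue is also more careful than the paper's.

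The one genuine weak point is your appeal to Northcott to obtain $[K(P_i):K]\to\infty$ and hence equidistribution of individual Galois orbits. The corollary is stated (per the section introduction) for an arbitrary product formula field $K$, where Northcott can fail, so single orbits need not grow. The paper sidesteps this by batching: it sets $S_n=\bigcup_{i=2^{n-1}}^{2^n}P_i$ and applies \cite[Theorem~10.24]{BakerRumely} to the Galois-stable sets $\Gal(\bar K/K)\cdot S_n$, whose cardinalities tend to infinity once the $P_i$ are distinct, independently of any Northcott property. Your contradiction goes through unchanged with this modification, since under your bounded-local-degree hypothesis every point of every $\Gal(\bar K/K)\cdot S_n$ still lands in the discrete value group of $K_v^{(N)}$ and hence avoids all but finitely many of the splash disks $\mathcal{B}_j$, contradicting the fact that each $\mathcal{B}_j$ carries positive $\mu_v$-mass.
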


\begin{proof} For each $n\ge1$, let $S_n=\bigcup_{i=2^{n-1}}^{2^n}P_i$, and let $\delta_n$ be the discrete probability measure supported equally on all points of $\textup{Gal}(\bar{K}/K)\cdot S_n$. By \cite[Theorem 10.24]{BakerRumely}, the sequence of measures $\delta_n$ converges weakly to the equilibrium measure on $\mathcal{K}_v$, which is supported on each of the $\mathcal{B}_i$ appearing in Proposition \ref{prop:shape}. It follows that there is a subsequence $\{P_{i_j}\}$ of the $P_i$ such that the ramification index of any prime $\mathfrak{Q}_{i_j}$ of $\mathcal{O}_{L^{i_j}}$ lying over $v$ tends to infinity as $i\to\infty$. \end{proof}

\begin{rmk} If $K$ has the Northcott property, then we need not pass to a subsequence in Corollaries \ref{cor:shape} and \ref{cor:basinnbhd}, since in that case $|\Gal(L^i/K)|\to\infty$ as $i\to\infty$.\end{rmk}

\begin{cor}\label{cor:basinnbhd}Let $f$, $v$ and $p_0$ be as above, and let $\mathcal{B}$ be the largest disk about $p_0$ contained in $\mathcal{K}_v$. Let $\mathcal{B}'$ be a Berkovich disk properly containing $\mathcal{B}$, let $\{P_i\}$ be a sequence of points in $\bar{K}$ with $\hat{h}_f(P_i)\to 0$ as $i\to\infty$, and let $\{P_{i_j}\}$ be the corresponding subsequence in the conclusion of Corollary \ref{cor:shape}. Let $L^i$ be as above, and let $s_i$ be the number of $\Gal(L^i/K)$ conjugates of $P_i$. Then there is an $\epsilon=\epsilon(f,\mathcal{B}')>0$ such that for all $i_j\gg1$, at least $\epsilon s_{i_j}$ of the $\Gal(L^{i_j}/K)$ conjugates of $P_{i_j}$ are contained in $\mathcal{B}'$.
\end{cor}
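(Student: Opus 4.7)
The plan is to combine the filled-Julia-set geometry of Proposition~\ref{prop:shape} with the Berkovich equidistribution theorem already invoked in the proof of Corollary~\ref{cor:shape}. First I would find an open Berkovich neighborhood of $\mathcal{B}$ sitting inside $\mathcal{B}'$ that carries positive equilibrium mass. Since $\mathcal{B}'$ properly contains the closed disk $\mathcal{B}=\mathcal{B}(p_0,r)$ about $p_0$, a short ultrametric argument shows $\mathcal{B}'$ contains the open disk $U:=\mathcal{B}(p_0,R)^-$ for some $R>r$: if $\mathcal{B}'=\mathcal{B}(a,R)$ (closed) or $\mathcal{B}(a,R)^-$ (open) contains $\mathcal{B}(p_0,r)$, then $|p_0-a|<R$ and $r<R$, so $\mathcal{B}(p_0,R)^-\subseteq\mathcal{B}'$ by the usual nested-disks calculation. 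By Proposition~\ref{prop:shape} one can then pick $j_0$ so that the disk $\mathcal{B}_{j_0}\subseteq\mathcal{K}_v$ produced there satisfies $r_{j_0}\in(r,R)$, whence $\mathcal{B}_{j_0}\subseteq U$. As recorded in the proof of Corollary~\ref{cor:shape}, the equilibrium measure $\mu_v$ on $\mathcal{K}_v$ charges each $\mathcal{B}_j$ from Proposition~\ref{prop:shape}; put $2\epsilon:=\mu_v(\mathcal{B}_{j_0})>0$, so that $\mu_v(U)\geq 2\epsilon$.

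Second, I would invoke equidistribution along the subsequence. The subsequence $\{P_{i_j}\}$ furnished by Corollary~\ref{cor:shape} is constructed precisely so that $|\Gal(L^{i_j}_{\mathfrak{Q}_{i_j}}/K_v)|\to\infty$, and by hypothesis $\hat h_f(P_{i_j})\to 0$. These are exactly the hypotheses of the individual-sequence version of \cite[Theorem 10.24]{BakerRumely}, which then gives weak convergence of the Galois-orbit probability measures
\[\mu_{i_j}:=\frac{1}{s_{i_j}}\sum_{\sigma\in\Gal(L^{i_j}/K)}\delta_{\sigma(P_{i_j})}\]
on $\mathbf{A}_v^1$ to $\mu_v$. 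Applying the portmanteau theorem to the open set $U$ yields $\liminf_j\mu_{i_j}(U)\geq\mu_v(U)\geq 2\epsilon$, and therefore $\mu_{i_j}(\mathcal{B}')\geq\mu_{i_j}(U)\geq\epsilon$ for $j\gg 1$. Unwinding the definition of $\mu_{i_j}$, this is precisely the statement that at least $\epsilon s_{i_j}$ of the $\Gal(L^{i_j}/K)$-conjugates of $P_{i_j}$ lie in $\mathcal{B}'$.

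The main obstacle I foresee is the one-sided nature of weak convergence: a lower bound of the form $\liminf_j\mu_{i_j}(E)\geq\mu_v(E)$ follows from portmanteau only when $E$ is open, so if $\mathcal{B}'$ happens to be a closed Berkovich disk the intermediate open disk $U$ strictly between $\mathcal{B}$ and $\mathcal{B}'$ is indispensable. A secondary technical point is that Corollary~\ref{cor:shape} as written only records the grouped form of equidistribution for the measures $\delta_n$; one must note that \cite[Theorem 10.24]{BakerRumely} also delivers the sequential statement for individual orbit measures under the hypothesis that local Galois-orbit sizes grow, and that this hypothesis is exactly what the thinning to $\{P_{i_j}\}$ secures. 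With these two points handled the argument is otherwise routine.
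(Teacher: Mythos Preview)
Your proposal is correct and follows exactly the route the paper sketches: positive equilibrium mass inside $\mathcal{B}'$ via Proposition~\ref{prop:shape}, then weak convergence of Galois-orbit measures to $\mu_v$ via \cite[Theorem~10.24]{BakerRumely}, just as in the proof of Corollary~\ref{cor:shape}. The only slip is cosmetic---the normalizing constant in your formula for $\mu_{i_j}$ should be $1/|\Gal(L^{i_j}/K)|$ (or else sum over the $s_{i_j}$ distinct conjugates)---and your observations about needing an open set for the portmanteau inequality and the individual-orbit form of equidistribution are precisely the details the paper's one-line proof leaves implicit.
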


\begin{proof}The proof follows from the same reasoning as Corollary \ref{cor:shape}, relying on \cite[Theorem 10.24]{BakerRumely}.\end{proof}

For $f\in K[z]$ a monic polynomial of degree $d\ge3$ such that $0$ is a superattracting fixed point, $v\in M_K^0\setminus\mathscr{S}_d$ a place of bad reduction, and $g_v$ the splitting radius of $f$ at $v$, let $\mathcal{B}_1$ be the Berkovich disk component of $f^{-1}(\mathcal{B}(0,e^{g_v}))$ containing $0$. For all $i\ge2$, let $\mathcal{B}_i$ be the unique disk component of $f^{-i+1}(\mathcal{B}_1)$ containing $0$, so that $\mathcal{B}_{i+1}\subseteq\mathcal{B}_i$. Write $r(\mathcal{B}_i)=\log\delta_v(\mathcal{B}_i,\mathcal{B}_i)$.

From here until the proof of Theorem \ref{thm:Bogomolov}, let $K$ be a number field or a one-dimensional function field of characteristic zero. We now introduce a proposition that will be needed in the proofs of Theorems \ref{thm:UBCsuperattracting} and \ref{thm:mincanht}. This result says that $\log_v\delta_v(\mathcal{B}_i,\mathcal{B}_i)$ has height tending to infinity as $i\to\infty$ --- something that is apparent enough from Proposition \ref{prop:shape}, but the key refinement is that the growth of this height can be bounded from below independently of the map and the place of bad reduction. This will allow us to conclude from the  hypothetical $K$-rationality of a ``well-equidistributed" large set of points of small canonical height that the multiplicative height of the map at $v$ is a large power of a uniformizer, a property that then tends to be inherited by most differences of preperiodic points (or more generally, points of small canonical height). Global quantitative equidistribution forces this to happen at a large proportion of the bad places $v$, making the differences of preperiodic points tend to be globally `powerful.' It is the `powerful' nature of these differences that constitutes the chief ingredient in the proofs of Theorems \ref{thm:UBCsuperattracting} and \ref{thm:mincanht}.

\begin{prop}\label{prop:uniformram}Let $f\in K[z]$ be a monic polynomial of degree $d\ge3$ with $0$ a superattracting fixed point, and let $v\in M_K^0\setminus\mathscr{S}_d$ be a place of bad reduction. If, for all $i\ge1$, $q_i$ is the least integer such that \[\textup{exp}(r(\mathcal{B}_i)q_i)\in|K_v^{\times}|,\] then given $k\in\mathbb{N}$, there is an $I=I(d,k)$ such that if $i\ge I$, then $q_i\ge k$.\end{prop}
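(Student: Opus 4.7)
The plan is to exploit the $v$-adic structure of $f$ near the superattracting fixed point $0$ to derive a linear recursion for the radii $r_i := r(\mathcal{B}_i)$, use bad reduction to rule out a degenerate stabilization at the fixed point of this recursion, and then analyze the resulting denominators $p$-adically for a prime $p \mid e$.

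Write $f(z) = z^e h(z)$ with $h(0) = c \ne 0$, and set $s_i := r_i/\alpha_v$ where $\alpha_v > 0$ generates $\log|K_v^\times|$. For $\mathcal{B}_i$ small enough to contain no root of $h$ (equivalently, in the small-radius regime where the lowest-order monomial $cz^e$ dominates), we have $|h(z)|_v = |c|_v$ on $\mathcal{B}_i$, hence $|f(z)|_v = |c|_v|z|_v^e$, giving the recursion $s_{i-1} = n + e \cdot s_i$ with $n := \log|c|_v/\alpha_v \in \mathbb{Z}$. This holds for all $i \ge i^*$, where $i^*$ is bounded in terms of $d$ by the number of ``special'' points (roots of $h$ and critical points of $f$), of which there are at most $O(d)$. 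Solving gives $s_i = s^* + (s_{i^*} - s^*)/e^{i - i^*}$ with $s^* := -n/(e-1)$ the unique fixed point.

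The key dynamical step is to show $s_{i^*} > s^*$ strictly, using the bad-reduction hypothesis. Suppose for contradiction $s_{i_0} = s^*$ for some $i_0 \ge i^*$. Then $\mathcal{B}_{i_0}$ coincides with the disk $\mathcal{B}^* := \{|z|_v \le \exp(s^* \alpha_v)\}$, which is $f$-invariant by the direct check $|f(z)|_v = |c|_v |z|_v^e \le |c|_v \exp(e s^* \alpha_v) = \exp(s^* \alpha_v)$ for $z \in \mathcal{B}^*$. Backward propagation of the recursion, using this invariance, forces $\mathcal{B}_j = \mathcal{B}^*$ for every $j \le i_0$; in particular $\mathcal{B}_0 = \mathcal{E} = \mathcal{B}^*$. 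But then $\mathcal{B}^* \subseteq \mathcal{K}_v \subseteq \mathcal{E} = \mathcal{B}^*$, so $\mathcal{K}_v = \mathcal{E}$ would be a disk, contradicting the bad-reduction hypothesis.

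Finally, pick any prime $p$ dividing $e$ (possible since $e \ge 2$; note $p \le e \le d$ differs from the residue characteristic of $v$ by the assumption $v \notin \mathscr{S}_d$). Since $\gcd(p, e-1) = 1$, the rational $s^*$ has $v_p(s^*) \ge 0$. Combined with $s_{i^*} \ne s^*$, the identity $s_i - s^* = (s_{i^*} - s^*)/e^{i-i^*}$ gives $v_p(s_i - s^*) \to -\infty$ linearly in $i$, so for $i$ large enough $v_p(s_i) = v_p(s_i - s^*) < 0$, and the denominator $q_i$ of $s_i$ in lowest terms satisfies $v_p(q_i) \ge (i - i^*) v_p(e) - v_p(s_{i^*} - s^*)$, tending to infinity. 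The hard part will be establishing uniformity of the threshold $I$ in $f$ and $v$; bounding $i^*$ by $O(d)$ is relatively direct from the ``dropping out'' of special points, but controlling $v_p(s_{i^*} - s^*)$ uniformly in the map requires iterated application of Proposition \ref{prop:nonarchmoduli} to the annuli $\mathcal{B}_{j-1} \setminus \mathcal{B}_j$ for $j \le i^*$, tracking the accumulated $p$-adic denominators through the transitional regime before the pure $cz^e$ recursion kicks in.
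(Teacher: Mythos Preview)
Your recursion-and-$p$-adic-valuation strategy contains a genuine gap at the claim $i^*=O(d)$. The disks $\mathcal{B}_i$ decrease to the largest $f$-invariant disk about $0$, and this limit disk may well contain nonzero roots of $h$, in which case the pure recursion $s_{i-1}=n+es_i$ governed by the lowest-order monomial $cz^e$ is \emph{never} reached. Concretely, take $f(z)=z^2(z-a)(z-b)$ monic of degree $4$ with $|a|_v\ll 1\ll|b|_v$: here $e=2$, $g_v=\log|b|_v$, $r(\mathcal{B}_1)=0$, and the $r(\mathcal{B}_i)$ decrease to $-\tfrac12 g_v$; since $|a|_v<|b|_v^{-1/2}$ the root $a$ lies in every $\mathcal{B}_i$, and the operative recursion is $r_{i-1}=\log|b|_v+3r_i$ for all $i\ge1$, with exponent $3$ rather than $e=2$. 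Your ``dropping out of special points'' heuristic fails because nothing forces those points to drop out of the nested intersection. Even after patching by replacing $e$ with the eventual local degree $e'$ on the limit disk, the uniformity obstacle you flag at the end is real and is not resolved by the sketch you give: in the same example one finds $s_i-s^*=\tfrac{1}{2\cdot 3^{i-1}}\cdot(g_v/\alpha_v)$, and $v_3(g_v/\alpha_v)$ is an unconstrained nonnegative integer depending on $b$, so tracking denominators through Proposition~\ref{prop:nonarchmoduli} on the transitional annuli cannot by itself pin down $I$.

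The paper's argument is structurally different and does not attempt to isolate an asymptotic linear regime near $0$. It works directly with the annuli $\mathcal{A}_i=\mathcal{B}_i\setminus\overline{\mathcal{B}_{i+1}}$, applies Proposition~\ref{prop:nonarchmoduli} inductively starting from $\mathrm{mod}(\mathcal{A}_{-1})=(d-1)g_v$ to obtain the two-sided bounds
\[
\frac{g_v}{(d-1)^i}\le\mathrm{mod}(\mathcal{A}_i)\le\frac{(d-1)g_v}{2^{i+1}}
\]
valid from $i=0$ onward, and then combines these with the Newton-polygon observation that $e^{g_vl}\in|K_v^\times|$ for some $1\le l\le d-2$. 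The moduli bounds are uniform from the outset, so there is no transitional threshold $i^*$ to control; the information about $r(\mathcal{B}_i)$ is extracted from the telescoping identity $r(\mathcal{B}_i)=g_v-\sum_{j<i}\mathrm{mod}(\mathcal{A}_j)$ together with the integrality of the annular covering degrees, rather than from a single closed-form recursion.
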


\begin{proof}Write $\mathcal{A}_i=\mathcal{B}_i\setminus\overline{\mathcal{B}_{i+1}}$ for all $i\ge1$. If $\mathcal{A}_{-1}$ is the annulus given by $f(\mathcal{B}(0,e^{g_v})^-)\setminus\mathcal{B}(0,e^{g_v})$, then $\textup{mod}(\mathcal{A}_{-1})=(d-1)g_v$. (Here it is crucial that $g_v$ is also equal to the splitting potential, as $f$ is monic and $v\notin\mathscr{S}_d$.) Let $\mathcal{A}_0$ be the annulus $\mathcal{B}(0,e^{g_v})^-\setminus\mathcal{B}_1$. By Proposition \ref{prop:nonarchmoduli}, it follows that \[g_v=(d-1)g_v/(d-1)\le\textup{mod}(\mathcal{A}_0)\le(d-1)g_v/2\] (cf. \cite[proof of Proposition 4.3]{Looper:mincanht}). By induction, we obtain \begin{equation}\label{eqn:modbounds}\frac{g_v}{(d-1)^i}\le\textup{mod}(\mathcal{A}_i)\le\frac{(d-1)g_v}{2^{i+1}}.\end{equation} To finish, we claim that from the Newton polygon of $f$ at $v$ one sees that there is some $1\le l\le d-2$ such that \begin{equation}\label{eqn:outerrad}e^{g_vl}\in|K_v^{\times}|\end{equation} Indeed, the splitting radius can be read off the slope of the rightmost segment of the Newton polygon. Since $f$ is assumed to be monic with bad reduction at $v$ and lowest degree term having degree at least $2$, this slope is a negative rational number, whose denominator in lowest terms is at most $d-2$. Combining (\ref{eqn:modbounds}) and (\ref{eqn:outerrad}) completes the proof.\end{proof}

The next lemma can be viewed as a description of the \emph{moment} of the $v$-adic absolute values of preperiodic points at a place $v$ of bad reduction of a monic polynomial $f$ having $0$ as a preperiodic point. Under these hypotheses, the average $v$-adic valuation of the preperiodic points is $0$. However, the average minimum $v$-adic valuation of a pair of preperiodic points is negative, and can be bounded uniformly from above in terms of $d$ and $g_v$. Lemma \ref{lem:heightwin} gives an upper bound on this average minimum for pairs of preperiodic points, one that is uniform across monic polynomials of a fixed degree having $0$ as a preperiodic point.

\begin{lem}\label{lem:heightwin}Let $f(z)\in K[z]$ be monic of degree $d\ge2$ with $0$ a preperiodic point of $f$. Let $\epsilon>0$, and let $T\subseteq K$ be a finite set. There are constants $\tau=\tau(\epsilon,d)>0$, $\eta=\eta(\epsilon,d)>0$ and $N=N(\epsilon,d)$ such that if $|T|\ge N$ and \[\fr{1}{|T|}\sum_{P_i\in T}\hat{h}_f(P_i)\le\tau h_{\textup{crit}}(f),\] then at least $\eta|T|^2$ elements $(P_i,P_j)\in T^2$ satisfy \begin{equation}\label{eqn:heightwin}\sum_{v\in M_K^0\setminus\mathscr{S}_d}r_v\log\max\{|P_i|_v,|P_j|_v\}\ge\left(\fr{1}{d^2}-\epsilon\right) h_{\textup{crit}}(f).\end{equation}\end{lem}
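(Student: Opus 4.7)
The plan is to combine global quantitative equidistribution (Theorem~\ref{thm:globaleq}) with the wing-decomposition geometry at places of bad reduction. Given $\epsilon > 0$, I would select auxiliary parameters $\epsilon_1 > 0$ small and $\delta \in (0,1)$ close to $1$ (both functions of $\epsilon$ and $d$, to be fixed at the end), and apply Theorem~\ref{thm:globaleq} with $m_0 = 1$ to obtain constants $\tau, N$ depending only on $\epsilon$ and $d$ such that whenever $|T| \ge N$ and $|T|^{-1} \sum_{P \in T} \hat{h}_f(P) \le \tau\, h_{\textup{crit}}(f)$, the set $T$ is $\epsilon_1$-equidistributed at level $1$ for a $\delta$-slice $S \subseteq M_K^0 \setminus \mathscr{S}_d$ of bad places.

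At each $v \in S$, fix any wing decomposition $\mathcal{E}_1 = A_v \sqcup B_v$, and let $C_v$ denote whichever of the two wings does \emph{not} contain the disk component of $\mathcal{E}_1$ containing $0$. By the distance property in the definition of a wing decomposition, every $P \in C_v$ satisfies $|P|_v = e^{g_v}$, since $0$ sits in the opposite wing at $v$-adic distance $e^{g_v}$ from $C_v$. The $\epsilon_1$-equidistribution condition (applied with the two wing labelings) gives $|T \cap C_v| \ge ((1-\epsilon_1)/d)|T|$, so at least $((1-\epsilon_1)/d)^2 |T|^2$ pairs $(P_i,P_j) \in T^2$ have both coordinates in $C_v$, and for each such pair $\log \max\{|P_i|_v, |P_j|_v\} = g_v$. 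Defining $Y(P_i,P_j) := \sum_{v \in S,\, P_i,P_j \in C_v} r_v g_v$, summing over $v \in S$ and invoking the $\delta$-slice inequality yields
\[
\sum_{(P_i,P_j) \in T^2} Y(P_i,P_j) \;\ge\; \left(\frac{1-\epsilon_1}{d}\right)^{\!2} \delta\, |T|^2 \!\sum_{v \in M_K^0 \setminus \mathscr{S}_d}\! r_v \lambda_{\textup{crit},v}(f).
\]
Using the trivial bound $Y(P_i,P_j) \le h_{\textup{crit}}(f)$, a Markov-type averaging argument then extracts a proportion $\eta = \eta(\epsilon,d) > 0$ of pairs for which $Y(P_i,P_j) \ge (1/d^2 - \epsilon/2)\, h_{\textup{crit}}(f)$, provided that the non-archimedean sum $\sum_{v \in M_K^0 \setminus \mathscr{S}_d} r_v \lambda_{\textup{crit},v}(f)$ constitutes a sufficiently large fraction of $h_{\textup{crit}}(f)$ (the complementary case being handled by a direct estimate, or trivially when $h_{\textup{crit}}(f)$ is bounded).

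To pass from $Y(P_i,P_j)$ to $\sum_{v \in M_K^0 \setminus \mathscr{S}_d} r_v \log \max\{|P_i|_v, |P_j|_v\}$, the contributions from $v \in (M_K^0 \setminus \mathscr{S}_d) \setminus S$ and from pairs at $v \in S$ where not both $P_i,P_j$ lie in $C_v$ must be controlled. The main obstacle is that these contributions may be \emph{negative}, arising when one of the points sits deep in the basin and has small $v$-adic absolute value. Using the product-formula identity $\sum_{v \in M_K} r_v \log^{-}|P|_v = h(P)$ together with the standard comparison $h(P) = \hat{h}_f(P) + O_f(1)$, the aggregate negative contribution per point is uniformly bounded, so choosing $\tau$ small enough absorbs it into the $\epsilon/2$ slack in the Markov step. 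The concluding step is the numerical bookkeeping: tuning $\epsilon_1, \delta, \tau$ in terms of $\epsilon$ and $d$ to produce the exact constant $1/d^2 - \epsilon$ promised by the lemma.
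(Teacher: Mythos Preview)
Your overall architecture---global equidistribution, wing geometry, Markov averaging---matches the paper's. The gap is in the step where you control the negative contributions to $\sum_{v\in M_K^0\setminus\mathscr{S}_d} r_v\log\max\{|P_i|_v,|P_j|_v\}$ from places where one or both points sit deep in the basin. You invoke the comparison $h(P)=\hat h_f(P)+O_f(1)$, but the $O_f(1)$ here is \emph{not} an absolute constant: for a monic polynomial $f$ one has $|h(P)-\hat h_f(P)|\le C(d)\,h_{\textup{crit}}(f)$ with $C(d)$ bounded away from zero. So your ``aggregate negative contribution'' is of order $h_{\textup{crit}}(f)$, not something that shrinks with $\tau$, and it cannot be absorbed into an $\epsilon/2$ slack against a gain of size $(1/d^2)h_{\textup{crit}}(f)$.

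The paper handles this by citing an equidistribution input (\cite[Proposition~3.7]{Looper:UBCpolys}) giving
\[
\frac{1}{|T|}\sum_{P\in T}\sum_{v\in M_K\setminus\mathscr{S}_d} r_v\log|P|_v \;\ge\; -\epsilon_1\sum_v r_v g_v,
\]
with $\epsilon_1$ arbitrarily small once $|T|$ is large and $\tau$ small. This is genuinely sharper than what the product formula plus the naive height comparison gives, and it is exactly what is needed. The paper then works with the nonnegative integrand $\log\max\{|P_i|_v,|P_j|_v\}-\log|P_j|_v$ (taking $P_i$ in the wing opposite $0$ and $P_j$ in the wing containing $0$, so the difference is $\ge g_v$ on the good pairs and $\ge 0$ everywhere), sums, and then adds back the averaged $\log|P_j|_v$ term using the displayed inequality. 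Your version with both points in $C_v$ can be made to work, but only after you replace the $O_f(1)$ argument with this sharper averaged lower bound.
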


\begin{proof}Let $0<\delta<1$, and let $\tau$ be such that \[\frac{1}{|T|}\sum_{P_i\in T}\hat{h}_f(P_i)\le\tau h_{\textup{crit}}(f).\] Let $\epsilon_1>0$. It follows from Theorem \ref{thm:globaleq} and \cite[Proposition 3.7]{Looper:UBCpolys} that if $\tau\ll_{d,\epsilon_1,K}1$ and $|T|\gg_{d,\epsilon_1,K}1$, then \begin{equation}\label{eqn:triviallowerbound}\frac{1}{|T|}\sum_{P_i\in T}\sum_{v\in M_K\setminus\mathscr{S}_d}r_v\log|P_i|_v\ge-\sum_{v\in M_K\setminus\mathscr{S}_d}r_v\epsilon_1g_v.\end{equation}
		
Let $\epsilon_2>0$. Theorem \ref{thm:globaleq} implies that if $|T|\gg_{d,\delta,\epsilon_2,K}1$ and $\tau\ll_{d,\delta,\epsilon_2,K}1$, there is a $\delta$-slice $\Sigma$ of bad places $v\in M_K^0\setminus\mathscr{S}_d$ such that $T$ is $\epsilon_2$-equidistributed at level $1$. For each $v\in\Sigma$, at least $\frac{(1-\epsilon_2)^2|T|^2}{d^2}$ elements $(P_i,P_j)\in T^2$ have the property that $|P_i|_v=e^{g_v}$ and $|P_j|_v\le 1$. (Here it is key that $0\in\mathcal{K}_v$. The upper bound on $|P_j|_v$ comes from the fact that $\delta_v(\mathcal{B}_{1,i},\mathcal{B}_{1,i})\le 1$ for each disk component $\mathcal{B}_{1,i}$ of $\mathcal{E}_1$, which can be seen in the proof of \cite[Proposition 3.4]{Looper:mincanht}.) For such $v$ and such $P_i,P_j$, \[\log\max\{|P_i|_v,|P_j|_v\}-\log|P_j|_v\ge g_v.\] Transposing local averages to global ones, it follows that \begin{equation}\label{eqn:lowerbound}\frac{1}{|T|^2}\sum_{v\in M_K^0\setminus\mathscr{S}_d}\sum_{(P_i,P_j)\in T^2}r_v\log\max\{|P_i|_v,|P_j|_v\}-r_v\log|P_j|_v\ge\delta\sum_{v\in M_K^0\setminus\mathscr{S}_d}r_v\frac{(1-\epsilon_2)^2}{d^2}g_v.\end{equation} Combining (\ref{eqn:lowerbound}) with (\ref{eqn:triviallowerbound}) yields \begin{equation}\label{eqn:avglb}\frac{1}{|T|^2}\sum_{v\in M_K^0\setminus\mathscr{S}_d}\sum_{(P_i,P_j)\in T^2}r_v\log\max\{|P_i|_v,|P_j|_v\}\ge\sum_{v\in M_K^0\setminus\mathscr{S}_d}r_v\left(\frac{(1-\epsilon_2)^2}{d^2}-\epsilon_1\right)g_v\end{equation} whenever $|T|\gg_{d,\delta,\epsilon,\epsilon_1,K}1$ and $\tau\ll_{d,\delta,\epsilon,\epsilon_1,K}1$.
	
Suppose the number of elements $(P_i,P_j)\in T^2$ satisfying (\ref{eqn:heightwin}) is $\eta|T|^2$, where $\eta\in[0,1]$. We have \[\frac{1}{|T|^2}\sum_{(P_i,P_j)\in T^2}\sum_{v\in M_K^0\setminus\mathscr{S}_d}\log\max\{|P_i|_v,|P_j|_v\}\le h_{\textup{crit}}(f)\] so long as $|T|$ is sufficiently large and $\tau$ is sufficiently small, relative to the above constants, so under this assumption we get that \begin{equation*}\begin{split}\frac{1}{|T|^2}\sum_{(P_i,P_j)\in T^2}\sum_{v\in M_K^0\setminus\mathscr{S}_d}r_v\log\max\{|P_i|_v,|P_j|_v\}&\le \left((1-\eta)\left(\fr{1}{d^2}-\epsilon\right)+\eta\right)h_{\textup{crit}}(f)\\&\le\frac{1}{\delta}\sum_{v\in M_K^0\setminus\mathscr{S}_d}\left((1-\eta)\left(\fr{1}{d^2}-\epsilon\right)+\eta\right)g_v.\end{split}\end{equation*} If $\eta,\epsilon_1$ and $\epsilon_2$ are sufficiently small and $\delta$ is sufficiently large, relative to $\epsilon$, the right-hand side is strictly smaller than the right-hand side of (\ref{eqn:avglb}), a contradiction. That $\delta=\delta(\epsilon_2,|T|)\to1$ as $|T|\to\infty$ follows from \cite[Proposition 4.1]{Looper:UBCpolys}.\end{proof}

\section{Proof of main theorems}

In this section, we prove Theorems \ref{thm:Bogomolov}, \ref{thm:UBCsuperattracting} and \ref{thm:mincanht}. Let $f\in K[z]$ be a monic polynomial of degree $d\ge2$. Let $S_1=\mathscr{S}_d\cup M_K^\infty$, and let $S_2$ be the set of places of good reduction for $f$. 

\begin{definition*} Let $\epsilon>0$. We say $\alpha\in K^*$ is \emph{$\epsilon$-good} if \[\sum_{v\in S_1\cup S_2}r_v\log|\alpha^{-1}|_v\le\epsilon h_{\textup{crit}}(f).\]\end{definition*}

The following proposition says that typical differences of preperiodic points are $\epsilon$-good, provided one considers averages across sets of preperiodic points with cardinalities large relative to $\epsilon$. In our application of $abc$ in the proofs of Theorems \ref{thm:UBCsuperattracting} and \ref{thm:mincanht}, this allows the contributions to the radical coming from places of good reduction to be assumed to be arbitrarily small, relative to the height of the map.

\begin{prop}\cite[Proposition 5.1]{Looper:UBCunicrit}\label{prop:adgoodbars} Let $f(z)\in K[z]$ be a monic polynomial of degree $d\ge2$, and let $\epsilon>0$. Let $T\subseteq K$ be a finite set, and let \[\mathcal{T}=\{P_j-P_i:P_i,P_j\in T\}.\] There are constants $\tau=\tau(d,\epsilon)>0$ and $N=N(d,\epsilon)$ such that if $|T|\ge N$ and \[\fr{1}{|T|}\sum_{P_i\in T}\hat{h}_f(P_i)\le\tau h_{\textup{crit}}(f),\] then at least $(1-\epsilon)|\mathcal{T}|$ elements of $\mathcal{T}$ are $\epsilon$-good.\end{prop}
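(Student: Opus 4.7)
The plan is a Markov-type reduction to a single pair-average, followed by a product-formula plus quantitative equidistribution argument. Let
$$X = \sum_{\alpha \in \mathcal{T}} \sum_{v \in S_1 \cup S_2} r_v \log|\alpha^{-1}|_v.$$
If the proposition fails, more than $\epsilon|\mathcal{T}|$ elements of $\mathcal{T}$ violate the $\epsilon$-goodness bound, so by definition $X > \epsilon^2 h_{\textup{crit}}(f)|\mathcal{T}|$. After excluding the diagonal differences $P_i=P_j$ (whose contribution is trivially handled), it therefore suffices to prove
$$\bar X := \frac{1}{|T|^2}\sum_{P_i \neq P_j}\sum_{v \in S_1 \cup S_2} r_v \log|P_j - P_i|_v^{-1} \;\leq\; \epsilon^2 h_{\textup{crit}}(f).$$

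Next I would invoke the product formula: since $\sum_{v} r_v \log|P_j - P_i|_v = 0$ for $P_i \neq P_j$, one can rewrite
$$\bar X = \frac{1}{|T|^2}\sum_{P_i \neq P_j}\sum_{v \in M_K^0 \setminus \mathscr{S}_d,\, v \text{ bad}} r_v \log|P_j - P_i|_v.$$
The naive bound coming from the fact that points of small canonical height lie in the disk of radius $e^{g_v}$ yields $\bar X \leq \sum_v r_v g_v \leq h_{\textup{crit}}(f)$, which is a factor of $\epsilon^{-2}$ too weak. The goal is instead to capture that \emph{most} pairs separate on the full scale $g_v$, so that the true contribution is not merely $O(g_v)$ per place but approaches the expected energy integral.

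This is where quantitative equidistribution enters. For monic $f$ the filled Julia set $\mathcal{K}_v$ has logarithmic capacity $1$ at every place $v$, hence
$$\int\!\!\int \log|x-y|_v\, d\mu_v(x)\, d\mu_v(y) = 0$$
at each $v$. A quantitative equidistribution statement (in the spirit of Favre--Rivera-Letelier, Fili, or Theorem \ref{thm:globaleq} where it applies), combined with a truncation of the log kernel at scale $e^{g_v}$, will give
$$\frac{1}{|T|^2}\sum_{P_i \neq P_j} \log|P_j - P_i|_v = o_\tau(g_v)$$
uniformly in $v$, once $|T| \gg_{d,\epsilon} 1$ and $\tau \ll_{d,\epsilon} 1$. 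Summing over $v \in M_K^0 \setminus \mathscr{S}_d$ weighted by $r_v$ and absorbing the error into $\epsilon^2 h_{\textup{crit}}(f)$ closes the bound.

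The hard part will be the singular behaviour of $\log|x-y|_v$ near the diagonal: a few exceptionally clustered pairs can dominate the raw average if left uncontrolled. The standard remedy is to truncate at a $v$-dependent scale and control the truncated tail via the $\hat{\lambda}_v$-distribution of $T$, but this has to be done with errors that remain summable across every bad place $v \notin \mathscr{S}_d$. It is precisely the monic hypothesis, forcing capacity $1$ at every place simultaneously, that makes the per-place target exactly $0$ rather than a place-dependent Robin constant, and thereby enables the uniform bound across all of moduli.
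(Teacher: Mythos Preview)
The paper does not actually prove this proposition; it is imported verbatim from \cite[Proposition 5.1]{Looper:UBCunicrit}, so there is no in-paper argument to compare against. Your outline---product formula to pass from $S_1\cup S_2$ to the bad places outside $\mathscr{S}_d$, then a global quantitative equidistribution estimate on the pairwise energy $\frac{1}{|T|^2}\sum_{i\ne j}\log|P_i-P_j|_v$ to force the average close to the Robin constant $0$---is the right skeleton, and matches the approach of the cited source.

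There is, however, a genuine gap in your Markov step. You write that if more than $\epsilon|\mathcal{T}|$ elements violate $\epsilon$-goodness then $X>\epsilon^2 h_{\textup{crit}}(f)|\mathcal{T}|$; this does not follow, because the individual summands $Y_\alpha:=\sum_{v\in S_1\cup S_2}r_v\log|\alpha^{-1}|_v$ are not known to be nonnegative. The archimedean and $\mathscr{S}_d$ places can contribute negatively, and a priori a few very negative $Y_\alpha$ could cancel the excess from the violators. The repair is to split $S_1$ from $S_2$: at $v\in S_2$ (good places for a monic $f$) points of small height lie in the closed unit disk up to an error controlled by $\hat h_f$, so $\log|\alpha^{-1}|_v\ge 0$ there and the $S_2$-part of $Y_\alpha$ is genuinely nonnegative; Markov then applies to that part. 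The $S_1$ contribution involves only boundedly many places (depending on $d$ and $[K:F]$) and is handled separately by bounding $\log|\alpha|_v$ in terms of $\hat\lambda_v(P_i),\hat\lambda_v(P_j)$ and $\lambda_{\textup{crit},v}(f)$, with the small-height hypothesis absorbing the error. A second, minor issue: Theorem~\ref{thm:globaleq} as stated is restricted to $f\in\mathfrak{S}_{d,m}(K)$, whereas the proposition is for arbitrary monic $f$; your parenthetical ``where it applies'' flags this, but you do need the more general discrepancy/energy estimate (as in \cite{Looper:UBCpolys} or Favre--Rivera-Letelier) rather than Theorem~\ref{thm:globaleq} itself.
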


\begin{proof}[Proof of Theorems \ref{thm:UBCsuperattracting} and \ref{thm:mincanht}] Fix $K$ a number field or a one-dimensional function field of characteristic zero. Let $d\ge3$, let $g(z)\in\mathfrak{S}_{d,1}(K)$, and let $T\subseteq K$ be such that \[\fr{1}{|T|}\sum_{P_i\in T}\hat{h}_g(P_i)\le\tau h_{\textup{crit}}(g).\] Let $\epsilon>0$. If $|T|\gg_{d,\epsilon}1$ and $\tau\ll_{d,\epsilon}1$, then by Proposition \ref{prop:adgoodbars} we can choose $P_1,P_2,P_3\in T$ such that $P_1-P_3,P_2-P_3$, and $P_1-P_2$ are $\epsilon$-good. Let $\mu(z)=z-P_3$, and $\tilde{g}=\mu g\mu^{-1}$. Then $\mu(P_3)=0$, $\mu(P_i)-\mu(P_j)=P_i-P_j$ for $1\le i,j\le3$, and $\tilde{g}\in K[z]$ with $K$-rational preperiodic points being translates of the $K$-rational preperiodic points of $g$. Therefore without loss, we may replace $g$ with $\tilde{g}$, replace $T$ with $T-P_3$, and thereby assume that $P_3=0$. We conjugate $g$ by an appropriate scaling to obtain a monic conjugate $f\in K'[z]$, where $K'/K$ is a finite extension containing the image of $T$ under this conjugation. If $|T|\gg_{d,\epsilon}1$ and $\tau\ll_{d,\epsilon}1$, we may further assume by Lemma \ref{lem:heightwin} that (\ref{eqn:heightwin}) holds for $P_i=P_1$ and $P_j=P_2$. Write $P=(-P_1,P_2,P_1-P_2)\in T^3$. Let $\eta=\eta(\epsilon,d)$ be as in Lemma \ref{lem:heightwin}, let $k\in\ZZ_{>0}$, and let $0<\delta<1$. By Proposition \ref{prop:uniformram} and Theorem \ref{thm:globaleq}, if $|T|\gg_{k,d,\delta}1$ and $\tau\ll_{k,d,\delta}1$, then there is a $\delta$-slice $\Sigma$ of bad places $v\in M_{K'}^0\setminus\mathscr{S}_d$ such that if $v\in\Sigma$, then \[|e^{g_v}|_v\in(|K_v|^\times)^k.\] Hence for $v\in\Sigma$, \begin{equation}\label{eqn:kcut}r_v\lambda_{\textup{crit},v}(f)\ge kN_v.\end{equation} Moreover, for any bad place $v\in M_{K'}^0\setminus\mathscr{S}_d$, we have from the Newton polygon of $f$ that \begin{equation}\label{eqn:diam}N_v\le dr_v\lambda_{\textup{crit},v}(f).\end{equation} (Here we are using the fact that since $f$ is monic, $\lambda_{\textup{crit},v}(f)=g_v$ for all $v\in M_{K'}^0\setminus\mathscr{S}_d$, and $g_v$ can be read off the slope of the rightmost segment of the Newton polygon of $f$ at $v$.) For $P=(-P_1,P_2,P_1-P_2)\in T^3$ as above, we write \[\textup{rad}'(P)=\sum_{v\in I(P)}e_vN_v,\] where $e_v$ is the ramification index of $v\in M_{K'}^0$ over the prime lying below $v$ in $M_K^0$. We note that if $K'$ is taken to be the minimal field of definition for $f$, then $e_v\le d-1$ for all $v\in M_{K'}^0$. Then (\ref{eqn:kcut}), (\ref{eqn:diam}) along with Lemma \ref{lem:heightwin} and \cite[Proposition 4.1]{Looper:UBCpolys} imply that when $|T|\gg_{\epsilon,d,\delta}1$ and $\tau\ll_{\epsilon,d,\delta}1$, \begin{equation}\begin{split}h(P)-\text{rad}'(P)\ge&\left(\fr{1}{d^2}-\epsilon\right)h_{\textup{crit}}(f)-\sum_{v\in I(P)}e_vN_v\\\ge&\left(\fr{1}{d^2}-\epsilon\right)h_{\textup{crit}}(f)-\sum_{\substack{v\in I(P)\\\textup{good}}}e_vN_v-\sum_{\substack{v\in M_K^0\setminus\mathscr{S}_d\\\textup{bad}}}e_vN_v-\sum_{v\in\mathscr{S}_d}e_vN_v\\\ge&\left(\fr{1}{d^2}-\epsilon\right)h_{\textup{crit}}(f)-3(d-1)\epsilon h_{\textup{crit}}(f)\\&-\sum_{\substack{v\in M_K^0\setminus\mathscr{S}_d\\\textup{bad}}}(d-1)r_v\left(\frac{\delta}{k}+d(1-\delta)\right)\lambda_{\textup{crit},v}(f)-(d-1)\epsilon h_{\textup{crit}}(f)\\\ge&\left(\frac{1}{d^2}-5\epsilon(d-1)-(1-\epsilon)(d-1)\left(\frac{\delta}{k}+d(1-\delta)\right)\right)h_{\textup{crit}}(f).\end{split}\end{equation} For $\epsilon\ll1$, $k\gg1$, and $\delta$ sufficiently close to $1$, this is a positive proportion of $h_{\textup{crit}}(f)$ uniformly bounded from below. As $P$ is in fact defined over $K$, with \[\textup{rad}'(P)=\textup{rad}_K(P),\] this violates the $abc$-conjecture when $h_{\textup{crit}}(f)=h_{\textup{crit}}(g)\gg_{\epsilon,d,\delta}1$.
	
Finally, suppose $h_{\textup{crit}}(g)$ lies below this bound. Then $g$ has a bounded number of places of bad reduction. In the case where $K$ is a number field, we have that by \cite{Ingram:critmod}, $h_{\textup{crit}}(g)\asymp h_{\mathcal{M}_d}(g)$, where $h_{\mathcal{M}_d}$ is the height associated to an embedding of the moduli space $\mathcal{M}_d$ of degree $d$ rational functions into projective space. Northcott's Theorem then completes the proof. If $K$ is a function field, and $g$ has at least one place of bad reduction, \cite[Main Theorem]{Benedetto} completes the proof. Otherwise, $g$ is isotrivial \cite[Theorem 1.9]{Baker:Northcott}, and \cite[Lemma 6.2]{Looper:UBCpolys} finishes the proof. This proves both theorems in the case $m=1$. The case $m\ge 2$ proceeds from passing to an appropriate iterate of the map to reduce to the case where $m=1$.\end{proof}

\begin{proof}[Proof of Theorem \ref{thm:Bogomolov}] Suppose for a contradiction that $\{P_i\}_{i=1}^\infty$ is a sequence of points with $\hat{h}_f(P_i)\to0$ as $i\to\infty$, and $K(\{P_i\}_{i=1}^\infty)/K$ an extension with abelian Galois closure. Moreover, we may suppose without loss that $f$ is monic; indeed, we can conjugate $f$ over a finite extension of $K$ to obtain a monic conjugate, and without loss replace $K$ by this extension. As before, write $L^i$ for the Galois closure of $K(P_i)$. Let $v$ be a place of bad reduction for $f$. Since the Galois closure of $K(\{P_i\}_{i=1}^\infty)/K$ is abelian, for any $P_i$, we have the equality of decomposition groups $\textup{Decomp}(\mathfrak{Q}_i|v)=\textup{Decomp}(\mathfrak{Q}_j|v)$ for any prime $\mathfrak{Q}_j$ of $L^i$ lying above $v$. 
	
Let $\mathcal{B}$ be the largest Berkovich disk about $p_0$ contained in $\mathcal{K}_v$. Let $\mathcal{B}'$ be a Berkovich disk properly containing $\mathcal{B}$ such that $\log\gamma_v(\mathcal{B}')\le\log\gamma_v(\mathcal{K}_v)-\eta$, where $\eta>0$. Fix an embedding of $\bar{K}$ into $\bar{K}_v\subseteq\mathbf{A}_v^1$, let $\epsilon=\epsilon(f,\mathcal{B}')$ be as in Corollary \ref{cor:basinnbhd}, and let \[T_i=\{\sigma(P_i):\sigma\in\textup{Decomp}(\mathfrak{Q}_i\mid v)\},\] where $\mathfrak{Q}_i$ is as in Corollary \ref{cor:shape}. From Corollary \ref{cor:shape}, we know that there is a subsequence $\{P_{ij}\}$ such that $|T_{i_j}|\to\infty$ as $i_j\to\infty$. To ease the notation, replace $\{P_i\}$ with this subsequence. For any $i$, let $S_i$ be the set of primes $\mathfrak{Q}_j$ of $L^i$ above $v$ such that $P_i\in\mathcal{B}'$. When $P_i\in\mathcal{B}'$, we have $T_i\subseteq\mathcal{B}'$, as \[\textup{Decomp}(\mathfrak{Q}_i\mid v)=\textup{Decomp}(w\mid v)\hspace{5mm}\text{ for all }w\mid v\] acts by isometries on $L_w^i$ for all $w\mid v$. Finally, we note that $\log\gamma_v(\mathcal{B}')<0$, as $\log\gamma_v(\mathcal{K}_v)=0$ because $f$ is monic. It thus follows from Corollary \ref{cor:basinnbhd} and our choice of $\epsilon$ that given $\epsilon'>0$ and $i\gg_{\epsilon'}1$, \begin{equation}\label{ineq:prodform}\begin{split} 0&=\sum_{w\in M_{L^i}}r_w\log d_w(T_i)-r_w\log\gamma_w(\mathcal{K}_w) \\&=\sum_{w\in S_i}r_w\log d_w(T_i)-r_w\log\gamma_w(\mathcal{K}_w)+\sum_{w\in M_{L^i}\setminus S_i}r_w\log d_w(T_i)-r_w\log\gamma_w(\mathcal{K}_w) \\&\le\sum_{w\in S_i}-\eta r_w\lambda_{\textup{crit},w}(f)+\sum_{\substack{w\mid v\\w\notin S_i}}\epsilon'r_w\lambda_{\textup{crit},w}(f)+\sum_{\substack{w\in M_{L^i}^0\\w\nmid v}}\epsilon'r_w\lambda_{\textup{crit},w}(f)+\sum_{w\in M_{L^i}^\infty}\epsilon'\\&\le(-\eta\epsilon+\epsilon'(1-\epsilon))r_v\lambda_{\textup{crit},v}(f)+\sum_{\substack{v'\in M_K^0\\v'\ne v}}\epsilon'r_v\lambda_{\textup{crit},v'}(f)+\sum_{w\in M_{L^i}^\infty}\epsilon'. \end{split}\end{equation} If $\epsilon'$ is sufficiently small compared to $\eta$ and $\epsilon$, the right-hand side of (\ref{ineq:prodform}) is strictly less than $0$ yielding a contradiction.\end{proof}

\end{document}